\DeclareMathOperator{\HS}{HS}
\DeclareMathOperator{\vspan}{span}
\DeclareMathOperator{\fchar}{char}
\DeclareMathOperator{\GL}{GL}
\DeclareMathOperator{\NF}{NF}
\newcommand{\blank}{{-}}
\theoremstyle{plain}
\newtheorem{thm}{Theorem}[section]
\newtheorem{lemma}[thm]{Lemma}
\newtheorem{prop}[thm]{Proposition}
\newtheorem{cor}[thm]{Corollary}
\theoremstyle{definition}
\newtheorem{defin}[thm]{Definition}
\theoremstyle{remark}
\newtheorem{rem}[thm]{Remark}
\newtheorem{notat}[thm]{Notation}
\renewcommand{\labelenumi}{(\roman{enumi})}
\def\cocoa{{\hbox{\rm C\kern-.13em o\kern-.07em C\kern-.13em o\kern-.15em A}}}
\setlist[itemize]{topsep=6.0pt plus 2.4pt minus 3.6pt, itemsep=3.0pt plus 1.5pt minus 0.6pt, parsep=3.0pt plus 1.5pt minus 0.6pt, leftmargin=29.37pt, listparindent=0.0pt, labelwidth=23.5pt}
\setlist[enumerate]{topsep=6.0pt plus 2.4pt minus 3.6pt, itemsep=3.0pt plus 1.5pt minus 0.6pt, parsep=3.0pt plus 1.5pt minus 0.6pt, leftmargin=29.37pt, listparindent=0.0pt, labelwidth=23.5pt}
\numberwithin{equation}{section}
\begin{document}
	\author{Alessio D'Al\`i}
	\address{Dipartimento di Matematica, Universit\`a degli Studi di Genova, Via Dodecaneso 35, 16146 Genova, Italy}
	\email{dali@dima.unige.it}
	\title{The Koszul property for spaces of quadrics of codimension three}
	\date{\today}

\begin{abstract}
In this paper we prove that, if $\mathbbm{k}$ is an algebraically closed field of characteristic different from 2, almost all quadratic standard graded $\mathbbm{k}$-algebras $R$ such that $\dim_{\mathbbm{k}}R_2 = 3$ are Koszul. More precisely, up to graded $\mathbbm{k}$-algebra homomorphisms and trivial fiber extensions, we find out that only two (or three, when the characteristic of $\mathbbm{k}$ is $3$) algebras of this kind are non-Koszul.

Moreover, we show that there exist nontrivial quadratic standard graded $\mathbbm{k}$-algebras with $\dim_{\mathbbm{k}}R_1 = 4$, $\dim_{\mathbbm{k}}R_2 = 3$ that are Koszul but do not admit a Gr\"obner basis of quadrics even after a change of coordinates, thus settling in the negative a question asked by Conca.
\end{abstract}

\keywords{Koszul algebra, quadratic Gr\"obner basis, quadratic ideal, space of quadrics, Hilbert series}
\subjclass[2010]{16S37, 13P10, 13F20}

         \maketitle
	%corpo del documento
	%!TEX root = koszul_arXiv.tex 

\section{Introduction}
A (commutative) standard graded algebra $R$ over a field $\mathbbm{k}$ is a quotient of a polynomial ring over $\mathbbm{k}$ in a finite number of variables by a homogeneous ideal $I$ not containing any linear form. We say that $R$ is \emph{Koszul} when the minimal graded free resolution of $\mathbbm{k}$ as an $R$-module is linear: for a survey about Koszulness in the commutative setting, see \cite{CDRKoszul}. It is well-known that any Koszul algebra is quadratic, i.e. its defining ideal $I$ is generated by quadrics. Note that, if $R$ is a trivial fiber extension, i.e. it contains a linear form $\ell \in R_1$ such that $\ell R_1 = 0$, then $R$ is Koszul if and only if $R/{\ell}$ is Koszul.

For the rest of this introduction, let $R$ be a quadratic standard graded $\mathbbm{k}$-algebra, where $\mathbbm{k}$ is an algebraically closed field of characteristic different from two.
Backelin proved in \cite{BackelinThesis} that, if $\dim_\mathbbm{k}R_2 = 2$, then $R$ is Koszul (this actually holds for any field $\mathbbm{k}$). Conca then proved in \cite{Conca2009} that, if $\dim_\mathbbm{k}R_2 = 3$ and $R$ is Artinian, then $R$ is Koszul. The main problem we are addressing in this paper is to find out what happens when, in the latter case, we drop the Artinian assumption. We will prove in Theorem \ref{mainthm} that, up to trivial fiber extension, the only non-Koszul algebras live in embedding dimension three. More precisely, these non-Koszul algebras in three variables are the two (or three, when $\fchar\mathbbm{k} = 3$) objects exhibited by Backelin and Fr\"oberg in \cite{BackFrPoin}. As a byproduct, we shall also obtain a list of all possible Hilbert series when $\dim_\mathbbm{k}R_2 = 3$, see Theorem \ref{hilblist}.

Our results are in agreement with the numerical data presented by Roos in the characteristic 0 four-variable case, see \cite{Roos}.

Another interesting property one can investigate is whether or not $R$ is G-quadratic, i.e. it admits a Gr\"obner basis of quadrics (possibly after a change of coordinates). It is well-known that G-quadraticness is a sufficient but not necessary condition for Koszulness, see \cite[Section 6]{ERT}. We will show in Corollary \ref{notGquad} that, in our context, there exist four-variable Koszul algebras which are not G-quadratic and are not trivial fiber extensions: this answers a question asked by Conca in \cite[Section 4]{Conca2009}.

Computations made using the computer algebra system \cocoa~\cite{CoCoA} helped us to produce conjectures and gave us hints about the behaviour of the objects studied.

\section{Tools and techniques}
\subsection{Koszulness and related concepts}
We are going to recall very briefly some definitions to put our results into context. We direct the interested reader to the survey \cite{CDRKoszul} for further information.

Let $\mathbbm{k}$ be a field, $S$ the polynomial ring $\mathbbm{k}[x_1, \ldots, x_n]$ and $I$ a homogeneous ideal of $S$ not containing any linear form. Let $R$ be the standard graded $\mathbbm{k}$-algebra $S/I$.

\begin{defin}
The (commutative) $\mathbbm{k}$-algebra $R$ is \emph{Koszul} if the minimal graded free resolution of the residue field $\mathbbm{k}$ as an $R$-module is linear.
\end{defin}

It is usually very difficult to establish the Koszulness of a certain algebra by making use of the definition only. Because of this we will collect below some sufficient conditions, see Theorem \ref{suffcondkoszul}.

\begin{defin}
If there exists a family $\mathfrak{F}$ of ideals of $R$ such that: \begin{itemize}
\item $(0) \in \mathfrak{F}$, $(R_1) \in \mathfrak{F}$,
\item every nonzero $I \in \mathfrak{F}$ is generated by elements of degree 1,
\item for every nonzero $I \in \mathfrak{F}$ there exists $J \in \mathfrak{F}$ such that $J \subseteq I$, $I/J$ is cyclic and $J :_R I \in \mathfrak{F}$,
\end{itemize}
we say that $R$ admits a \emph{Koszul filtration} $\mathfrak{F}$, see \cite{CTVKoszul}.
\end{defin}

\begin{defin}
If there exists a change of coordinates $g \in \GL_n(\mathbbm{k})$ such that $g(I)$ admits a quadratic Gr\"obner basis with respect to some term order $\tau$, we say that $R$ is \emph{G-quadratic}.
\end{defin}

\begin{defin}An \emph{integral weight} $\omega = (\omega_1, \ldots, \omega_n)$ is an element of $\mathbb{N}^n$. Every integral weight induces a grading on $S$ obtained by imposing that $\deg(x_i) = \omega_i$ for every $i \in \{1, \ldots, n\}$. For this reason, we will often introduce a weight $\omega$ by associating an integer $\omega_i$ with each variable $x_i$ in $S$.

Given a nonzero $f \in S$, we will denote by $in_{\omega}(f)$ the part of $f$ having maximum degree with respect to the grading induced by $\omega$. We will denote by $in_{\omega}(I)$ the \emph{initial ideal} of $I$ with respect to $\omega$, i.e. the ideal generated by $\{in_{\omega}(f) \ | \ f \in I, f \neq 0\}$.
\end{defin}

\begin{rem} \label{initialrem}
In what follows we will deal very frequently with initial ideals, with respect to both weights and term orders: let us record an easy observation for further reference. Fix an ideal $I $ of $S$ and let $\omega$ be either a weight or a term order on $S$. Now pick an ideal $J$ of $S$ such that $J \subseteq in_{\omega}(I)$ (for instance, $J$ may be chosen as the ideal generated by the initial parts with respect to $\omega$ of a fixed generating set of $I$). If the Hilbert series $\mathbf{H}$ of $S/J$ happens to be coefficientwise smaller than or equal to the Hilbert series of $S/I$, then (denoting Hilbert series by $\HS$ and coefficientwise inequality by $\preceq$) one has that
\[\mathbf{H} \preceq \HS(S/I) = \HS(S/in_{\omega}(I)) \preceq \HS(S/J) = \mathbf{H}\]
and hence $in_{\omega}(I) = J$.
We will often make use of this remark in Section \ref{mainthmproof} when we claim what $in_{\omega}(I)$ is: for an example, see the end of Subsection \ref{firstcase}.
\end{rem}

\begin{thm}[Sufficient conditions for Koszulness] \label{suffcondkoszul}
One has that:
\begin{itemize}
\item if $R$ admits a Koszul filtration, then $R$ is Koszul.
\item if $R$ is G-quadratic, then $R$ is Koszul.
\item if there exists an integral weight $\omega \in \mathbb{N}^n$ such that $S/in_{\omega}(I)$ is Koszul, then $R$ is Koszul.
\end{itemize}
\end{thm}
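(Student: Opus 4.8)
The plan is to prove each of the three sufficient conditions for Koszulness separately.

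The plan is to treat the three implications separately, as each rests on a different classical technique; I would follow \cite{CTVKoszul} for the first and the standard Gr\"obner-degeneration machinery (compare the survey \cite{CDRKoszul}) for the other two. Throughout, recall that $R$ is Koszul precisely when $\Tor^R_i(\mathbbm{k}, \mathbbm{k})_j = 0$ for every $j \neq i$, i.e. when the residue field has a linear resolution.

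For the Koszul filtration I would show, by induction on the homological degree $i$, that $\Tor^R_i(R/I, \mathbbm{k})_j = 0$ for all $j > i$ and all $I \in \mathfrak{F}$; taking $I = (R_1)$ then gives the linearity of the resolution of $\mathbbm{k}$. The case $i = 0$ is immediate. For the inductive step, given a nonzero $I \in \mathfrak{F}$ choose $J \in \mathfrak{F}$ as in the definition: since $I/J$ is cyclic and generated in degree one, $I/J \cong (R/(J :_R I))(-1)$, and there is a short exact sequence
\[0 \to (R/(J :_R I))(-1) \to R/J \to R/I \to 0.\]
The induction hypothesis applied to $J :_R I \in \mathfrak{F}$ kills the term $\Tor^R_{i-1}((R/(J:_R I))(-1), \mathbbm{k})_j = \Tor^R_{i-1}(R/(J:_R I), \mathbbm{k})_{j-1}$ for $j > i$, so the long exact sequence exhibits $\Tor^R_i(R/I, \mathbbm{k})_j$ as a quotient of $\Tor^R_i(R/J, \mathbbm{k})_j$. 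Iterating the construction on $J$ produces a descending chain in $\mathfrak{F}$ along which $\dim_{\mathbbm{k}} I_1$ strictly drops, hence reaches $(0)$ after finitely many steps; since $\Tor^R_i(R, \mathbbm{k})_j = 0$ for $i > 0$, the resulting chain of surjections forces $\Tor^R_i(R/I, \mathbbm{k})_j = 0$.

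For the third implication I would run the classical weight degeneration. The integral weight $\omega$ gives rise to a one-parameter family over $\mathbbm{k}[t]$, obtained by $\omega$-homogenizing $I$ with a new variable $t$, whose fibre over any $t \neq 0$ is isomorphic to $R$ and whose fibre over $t = 0$ is $S/in_\omega(I)$. This family is flat over $\mathbbm{k}[t]$; concretely, flatness is guaranteed by the equality of Hilbert series $\HS(S/I) = \HS(S/in_\omega(I))$ recorded in Remark \ref{initialrem}. Graded Betti numbers of the residue field are upper semicontinuous in such a flat family, so $\beta^R_{i,j}(\mathbbm{k}) \leq \beta^{S/in_\omega(I)}_{i,j}(\mathbbm{k})$ for all $i, j$. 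If $S/in_\omega(I)$ is Koszul, the right-hand side vanishes for $j \neq i$, whence the same holds for $R$, and $R$ is Koszul.

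Finally, the G-quadratic case follows from the third implication. After applying the change of coordinates $g$ — which does not affect Koszulness, being an isomorphism of graded algebras — I may assume $g(I)$ has a quadratic Gr\"obner basis with respect to a term order $\tau$. For the finitely many elements of this basis one can select a single integral weight $\omega$ picking out the same leading monomials as $\tau$, so that $in_\omega(g(I)) = in_\tau(g(I))$ by the Hilbert-series comparison of Remark \ref{initialrem}. As $in_\tau(g(I))$ is a quadratic monomial ideal, $S/in_\tau(g(I))$ is Koszul — for instance, the ideals generated by subsets of the variables form a Koszul filtration — and the third implication applies. The genuinely substantial step is this deformation principle: setting up the flat family and invoking semicontinuity of Betti numbers is where the real content lies, whereas once it is available both the G-quadratic reduction and the filtration induction are essentially formal.
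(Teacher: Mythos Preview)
Your argument is correct and follows the standard lines one finds in \cite{CTVKoszul} and \cite{CDRKoszul}. Note, however, that the paper does not actually prove Theorem~\ref{suffcondkoszul}: it is recorded there as background, with the references to \cite{CTVKoszul} and \cite{CDRKoszul} serving in lieu of a proof. So there is no ``paper's own proof'' to compare against; you have simply supplied the details the paper chose to omit.

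A couple of minor remarks on presentation. In the filtration argument you are really running a double induction (outer on $i$, inner on $\dim_{\mathbbm{k}} I_1$); this is implicit in your phrase ``iterating the construction on $J$'', but making the two inductions explicit would be cleaner. In the G-quadratic reduction, the fact that every quadratic monomial quotient is Koszul is exactly Fr\"oberg's theorem, and your parenthetical proof via the filtration by subsets of variables is correct: for a quadratic monomial ideal $I$ and variables $x_{i_1},\ldots,x_{i_k}$, one has $(\overline{x}_{i_2},\ldots,\overline{x}_{i_k}):_{S/I}(\overline{x}_{i_1}) = (\overline{x}_{i_2},\ldots,\overline{x}_{i_k}) + (\overline{x}_j : x_{i_1}x_j \in I)$, which is again generated by variables. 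Finally, your invocation of Remark~\ref{initialrem} to conclude $in_\omega(g(I)) = in_\tau(g(I))$ is on the nose: once $\omega$ selects the $\tau$-leading monomials of the Gr\"obner basis, the containment $in_\tau(g(I)) \subseteq in_\omega(g(I))$ together with equality of Hilbert series forces equality of ideals.
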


\begin{defin}
If $R$ contains a nonzero linear form $\ell$ such that $\ell R_1 = 0$, then $R$ is called a \emph{trivial fiber extension} (of $R/{\ell}$).
\end{defin}

The concept of trivial fiber extension gives us the following useful reduction when looking for Koszulness or G-quadraticness.

\begin{prop}[\cite{BackFr}, {\cite[Lemma 4]{Conca2000}}] \label{KoszulTFE}
Assume $R$ contains a nonzero linear form $\ell$ such that $\ell R_1 = 0$. Then $R$ is Koszul (resp. G-quadratic) if and only if $R/{\ell}$ has the same property.
\end{prop}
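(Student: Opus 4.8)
The plan is to exploit the rigid structure forced by the relation $\ell R_1 = 0$. First I would note that $\ell R_1 = 0$ already implies $\ell R_m = 0$ for every $m \geq 1$, since $R_m = R_1 R_{m-1}$ for $m \geq 2$; hence the ideal $(\ell)$ is just the one-dimensional space $\mathbbm{k}\ell$, which as a graded $R$-module is annihilated by $R_+$ and so is isomorphic to $\mathbbm{k}(-1)$. In other words $R$ is the fiber product $A \times_{\mathbbm{k}} \mathbbm{k}[\ell]/(\ell^2)$ over $\mathbbm{k}$, where $A = R/\ell$. Choosing coordinates on $S$ so that $\ell$ is the last variable $x_n$, the hypothesis reads $x_i x_n \in I$ for all $i$, and one checks that $I = I'S + (x_1 x_n, \ldots, x_{n-1}x_n, x_n^2)$, where $I' \subseteq S' = \mathbbm{k}[x_1, \ldots, x_{n-1}]$ is the defining ideal of $A = S'/I'$ (indeed $I \cap S' = I'$). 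These two descriptions, the fiber product and the explicit generators, drive the two halves of the statement.

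For the Koszul equivalence I would argue through the graded Poincar\'e series $P^R_{\mathbbm{k}}(s,t) = \sum_{i,j} \dim_{\mathbbm{k}} \Tor^R_i(\mathbbm{k}, \mathbbm{k})_j \, s^i t^j$, recalling that $R$ is Koszul precisely when this is a function of the single product $st$, i.e. when $\Tor^R_i(\mathbbm{k},\mathbbm{k})_j = 0$ for $i \neq j$. The key input is the fiber-product formula relating the Poincar\'e series of a fiber product to those of its factors (this is the content of \cite{BackFr}); applied to $R = A \times_{\mathbbm{k}} \mathbbm{k}[\ell]/(\ell^2)$ and using that $\mathbbm{k}[\ell]/(\ell^2)$ is Koszul with $P^{\mathbbm{k}[\ell]/(\ell^2)}_{\mathbbm{k}}(s,t) = (1 - st)^{-1}$, it yields
\[
\frac{1}{P^R_{\mathbbm{k}}(s,t)} = \frac{1}{P^A_{\mathbbm{k}}(s,t)} - st .
\]
From this identity it is immediate that $P^R_{\mathbbm{k}}$ depends on $st$ alone if and only if $P^A_{\mathbbm{k}}$ does, which is exactly the assertion that $R$ is Koszul if and only if $A$ is. Equivalently, one can feed the short exact sequence $0 \to \mathbbm{k}(-1) \to R \to A \to 0$ into a change-of-rings argument to obtain the same relation on the diagonal Betti numbers.

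For the G-quadratic equivalence I would work directly with Gr\"obner bases using the explicit generators above. In the forward direction, suppose $A$ is G-quadratic, so that after a change of coordinates $g'$ on $S'$ the ideal $g'(I')$ has a quadratic Gr\"obner basis $\mathcal{G}'$ with respect to some term order $\tau'$. Lift $g'$ to a change of coordinates on $S$ fixing $x_n$, and extend $\tau'$ to a term order $\tau$ on $S$ in which $x_n$ is the smallest variable. I claim that $\mathcal{G} = \mathcal{G}' \cup \{x_1 x_n, \ldots, x_{n-1}x_n, x_n^2\}$ is a quadratic Gr\"obner basis of $g(I)$: all elements are quadrics, and Buchberger's criterion is checked pair by pair. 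The $S$-polynomials among the $x_i x_n$ and $x_n^2$ vanish identically; those between an element of $\mathcal{G}'$ and some $x_i x_n$ reduce to zero because multiplying an element of $S'$ by $x_n$ lands in $(x_1 x_n, \ldots, x_{n-1}x_n)$ and so reduces to $0$ modulo $\mathcal{G}$; and the $S$-polynomials internal to $\mathcal{G}'$ reduce to zero since $\mathcal{G}'$ is already a Gr\"obner basis and these reductions never involve $x_n$. Hence $R$ is G-quadratic.

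The reverse G-quadratic implication is the part I expect to be the main obstacle, because a change of coordinates realizing a quadratic Gr\"obner basis of $R$ need not respect the distinguished socle line $\mathbbm{k}\ell$, so one cannot simply restrict the given basis. The strategy I would follow (this is \cite[Lemma 4]{Conca2000}) is to start from a quadratic Gr\"obner basis of $g(I)$ and eliminate the trivial variable, using that $A$ is realized as the algebra retract $S'/I'$ of $R$ with $I' = I \cap S'$, in order to transfer the quadratic initial structure down to $I'$; one then checks that the resulting generators of $I'$, after a suitable change of coordinates on $S'$, form a quadratic Gr\"obner basis. Carrying out this elimination compatibly with a term order, rather than through the counting argument that suffices in the Koszul case, is the delicate point, and it is exactly here that I would lean on the cited references instead of reproving everything from scratch.
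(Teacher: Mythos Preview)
The paper does not prove this proposition at all: it is stated with attributions to \cite{BackFr} and \cite[Lemma 4]{Conca2000} and no argument is given. So there is no ``paper's own proof'' to compare against; your write-up is an expansion of what those references contain.

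On the substance: your Koszul argument via the fiber-product Poincar\'e-series identity is the standard one and is correct as written. For the G-quadratic part, your forward direction (if $A$ is G-quadratic then $R$ is) is fine; extending the term order so that $x_n$ is smallest and checking Buchberger pairs is exactly the routine verification. Your honesty about the reverse G-quadratic implication is appropriate: the difficulty is real, since an arbitrary $g \in \GL_n(\mathbbm{k})$ making $g(I)$ have a quadratic Gr\"obner basis need not send $\ell$ to a variable, so one cannot simply project. Conca's Lemma~4 handles this by first moving the socle element back to a coordinate and then using a careful elimination/deformation argument on the initial ideal; you correctly identify this as the nontrivial step and defer to the reference, which is precisely what the paper itself does.
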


\subsection{Apolarity pairing} \label{apolarity}
A very useful tool in what follows is the so-called \emph{apolarity} pairing, see for instance Geramita \cite[Lecture 2]{GeramitaApolarity} or Iarrobino and Kanev \cite[Chapter 1 and Appendix A]{IarrobinoKanev} for an introduction. We will recall here some results following Ehrenborg and Rota \cite[Section 3]{EhrenborgRotaApolarity} as our main reference.

Let $d$ be a positive integer and $\mathbbm{k}$ be a field of characteristic either 0 or greater than $d$. Let $S$ and $T$ be two copies of the polynomial ring over $\mathbbm{k}$ in $n$ variables ($x_1, \ldots, x_n$ and $u_1, \ldots, u_n$ respectively). We want to regard each $u_i$ as the partial derivative operator $\frac{\partial}{\partial x_i}$; more precisely, we define a symmetric bilinear pairing \[\langle \blank \mid \blank \rangle\colon T_d \times S_d \to \mathbbm{k}\] by asking that \[\langle u_1^{i_1} \ldots u_n^{i_n} \mid x_1^{j_1} \ldots x_n^{j_n} \rangle = i_1! \ldots i_n! \delta_{i_1, j_1} \ldots \delta_{i_n, j_n}\]
(where $\delta_{ij}$ equals 1 when $i = j$ and 0 otherwise) and then extending bilinearly.
Note that, because of our hypothesis on the characteristic of $\mathbbm{k}$, this pairing is nonsingular.
It is also known (\cite[Corollary 3.1]{EhrenborgRotaApolarity}) that acting on a vector subspace $V \subseteq S_d$ by a change of coordinates $g \in \GL_n(\mathbbm{k})$ corresponds to acting on the orthogonal $V^{\perp} \subseteq T_d$ by the change of coordinates $\tilde{g}$ obtained by inverting and transposing the matrix representing $g$ (and analogously if we take a vector subspace of $T_d$ instead).

The proof of the following result is a simple computation that works for any field of characteristic either 0 or greater than $d$, see for instance \cite[Proposition 3.1]{EhrenborgRotaApolarity}. The dot stands for the usual dot product, whereas $\mathbf{u}$ and $\mathbf{x}$ stand for $(u_1, \ldots, u_n)$ and $(x_1, \ldots, x_n)$ respectively.

\begin{lemma} \label{EhrenborgRotaLemma}
Given $\mathbf{c} \in \mathbbm{k}^n$, one has that for any $f \in S_d$ \[\langle (\mathbf{c} \cdot \mathbf{u})^d \mid f(\mathbf{x}) \rangle = d! f(\mathbf{c})\] and, analogously, for any $g \in T_d$ \[\langle g(\mathbf{u}) \mid (\mathbf{c} \cdot \mathbf{x})^d \rangle = d!g(\mathbf{c}).\]
\end{lemma}

\begin{notat}
Let $V \subseteq S_d$ be a vector subspace and let $I = (V)$ be the ideal generated by it. For the rest of this paper we will denote the projective variety which is the zero locus of $I$ either by $\mathcal{V}(I)$ or $\mathcal{V}(V)$.
\end{notat}

We will be interested in studying the nonzero linear forms of $S/I$ whose $d$-th power is zero, or equivalently the nonzero linear forms of $S$ whose $d$-th power lies in $V$. Because of this equivalence, we will consider the two concepts interchangeably.

\begin{defin}
Let $\ell \in (S/I)_1$, $\ell \neq 0$.
\begin{itemize}
\item The \emph{rank} of $\ell$ is the dimension of $\ell (S/I)_1$ as a $\mathbbm{k}$-vector space.
\item If $\ell^2 = 0$ in $S/I$, we will call $\ell$ a \emph{null-square linear form}. (Note that a null-square linear form is nonzero by definition.)
\end{itemize}
\end{defin}

Lemma \ref{EhrenborgRotaLemma} immediately implies the following very useful statement:
\begin{lemma}  \label{linear_forms_and_points}
Let $V$ be a vector subspace of $S_d$. The correspondence \[\mathbb{P}(S_1) \ni [\mathbf{c} \cdot \mathbf{x}] \mapsto \mathbf{[c]} \in \mathbb{P}^{n-1}_{\mathbbm{k}}\] restricts to a bijection between the nonzero linear forms of $S$ whose $d$-th power lies in $V$ and the points of $\mathcal{V}(V^{\perp})$. Symmetrically, we also get a bijection between the nonzero linear forms of $T$ whose $d$-th power lies in $V^{\perp}$ and the points of $\mathcal{V}(V)$.
\end{lemma}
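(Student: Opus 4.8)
The plan is to reduce the whole statement to the single pointwise equivalence
\[(\mathbf{c} \cdot \mathbf{x})^d \in V \iff [\mathbf{c}] \in \mathcal{V}(V^{\perp}),\]
since the assignment $[\mathbf{c} \cdot \mathbf{x}] \mapsto [\mathbf{c}]$ is visibly a bijection of $\mathbb{P}(S_1)$ onto $\mathbb{P}^{n-1}_{\mathbbm{k}}$: it merely reads off the coefficient vector of a linear form, and it is well-defined and injective because rescaling the form rescales $\mathbf{c}$. Once I know that a projective class lies in the source subset exactly when its image lies in $\mathcal{V}(V^{\perp})$, the restriction is automatically a bijection. Before that I should check that both subsets are genuinely projective: if $(\mathbf{c}\cdot\mathbf{x})^d \in V$ then $(\lambda\mathbf{c}\cdot\mathbf{x})^d = \lambda^d (\mathbf{c}\cdot\mathbf{x})^d \in V$ because $V$ is a subspace, and $\mathcal{V}(V^{\perp})$ is by definition a projective variety, so both conditions depend only on $[\mathbf{c}]$.

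For the equivalence itself, the first ingredient is that the apolarity pairing is nonsingular, which over finite-dimensional spaces yields $(V^{\perp})^{\perp} = V$. Hence $(\mathbf{c}\cdot\mathbf{x})^d \in V$ if and only if $(\mathbf{c}\cdot\mathbf{x})^d$ is annihilated by every $g \in V^{\perp}$, i.e. $\langle g(\mathbf{u}) \mid (\mathbf{c}\cdot\mathbf{x})^d \rangle = 0$ for all $g \in V^{\perp}$. The second ingredient is Lemma \ref{EhrenborgRotaLemma}, which evaluates this pairing as $\langle g(\mathbf{u}) \mid (\mathbf{c}\cdot\mathbf{x})^d \rangle = d!\, g(\mathbf{c})$. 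Because the characteristic of $\mathbbm{k}$ is $0$ or larger than $d$, the factor $d!$ is invertible, so the vanishing of the pairing is equivalent to $g(\mathbf{c}) = 0$. Therefore $(\mathbf{c}\cdot\mathbf{x})^d \in V$ holds exactly when every $g \in V^{\perp}$ vanishes at $\mathbf{c}$, which is precisely $[\mathbf{c}] \in \mathcal{V}(V^{\perp})$.

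The symmetric half follows by running the same argument with the roles of $S$ and $T$ (equivalently $\mathbf{x}$ and $\mathbf{u}$) interchanged, invoking the other evaluation formula $\langle (\mathbf{c}\cdot\mathbf{u})^d \mid f(\mathbf{x}) \rangle = d!\, f(\mathbf{c})$ from Lemma \ref{EhrenborgRotaLemma}; here membership of $(\mathbf{c}\cdot\mathbf{u})^d$ in $V^{\perp}$ is, by definition of the orthogonal, just the vanishing of its pairing against all of $V$, so one does not even need the double-orthogonal identity. I expect no genuine obstacle in this proof: the only points deserving care are confirming that both defining conditions are scale-invariant so that the correspondence truly descends to projective space, and making explicit use of the characteristic hypothesis to guarantee $d! \neq 0$, without which the key equivalence could break down. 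In short, the lemma is essentially a dictionary translating the membership condition into a vanishing condition through the nondegenerate pairing, and the bulk of the argument is bookkeeping rather than real difficulty.
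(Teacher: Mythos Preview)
Your argument is correct and is precisely the intended one: the paper simply states that the lemma follows immediately from Lemma~\ref{EhrenborgRotaLemma}, and what you wrote is exactly the unpacking of that implication via the nondegeneracy of the pairing and the invertibility of $d!$.
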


In the rest of the paper, when no confusion arises, we will use the same name for the two copies of the polynomial ring in the pairing.

We end this section by a technical remark for further reference.

\begin{rem} \label{apolarityrem}
Let $\{G_1, \ldots, G_c\}$ be a $\mathbbm{k}$-basis of $V^{\perp}$. If there exist monomials $m_1, \ldots, m_c$ such that each $m_i$ appears in the support of $G_i$ only, then:
\begin{itemize}
\item there exists a $\mathbbm{k}$-basis of $V$ (and hence a minimal generating set for $I$) made of elements of the form \[F_m = m - a_1m_1 - \ldots - a_cm_c,\] where $m$ is a monomial different from $m_1, \ldots, m_c$ and the coefficient $a_i \in \mathbbm{k}$ is nonzero if and only if $m$ appears in $G_i$;
\item the cosets of the $m_i$'s form a $\mathbbm{k}$-basis of $(S/I)_d$. 
\end{itemize}
\end{rem}

\section{The main result} \label{mainresult}
We state below the main result of this paper and will devote Section \ref{mainthmproof} to proving it. For the rest of the section, when $S$ is a polynomial ring, we will denote by $\mathfrak{m}$ the maximal ideal generated by its variables.
\begin{thm} \label{mainthm}
Let $\mathbbm{k}$ be an algebraically closed field of characteristic different from 2, let $S$ be a polynomial ring over $\mathbbm{k}$ and let $I \subseteq \mathfrak{m}^2$ be a quadratic ideal of $S$. If $\dim_{\mathbbm{k}}(S/I)_2 = 3$, then $S/I$ is Koszul if and only if it is not isomorphic as a graded $\mathbbm{k}$-algebra (up to trivial fiber extension) to any of these:
\begin{enumerate}
\item $\mathbbm{k}[x, y, z]/(y^2+xy, xy+z^2, xz)$
\item $\mathbbm{k}[x, y, z]/(y^2, xy+z^2, xz)$
\item $\mathbbm{k}[x, y, z]/(y^2, xy+yz+z^2, xz)$.
\end{enumerate}
Note that, if the characteristic of $\mathbbm{k}$ is not 3, (ii) and (iii) are actually isomorphic as graded $\mathbbm{k}$-algebras.
\end{thm}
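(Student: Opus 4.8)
The plan is to reduce the classification to a finite, highly structured case analysis organized by the invariant $\dim_{\mathbbm{k}}(S/I)_1$ (the embedding dimension) together with the geometry of the space of quadrics $V^\perp \subseteq T_2$ under the apolarity pairing of Subsection~\ref{apolarity}. Let me sketch how I would prove it.

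<br>

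Let me set up the key numerics first. We have $\dim_{\mathbbm{k}}(S/I)_2 = 3$, so if $n = \dim_{\mathbbm{k}}(S/I)_1$ is the embedding dimension, then $I$ is generated by exactly $\binom{n+1}{2} - 3$ quadrics, and $V = I_2 \subseteq S_2$ has dimension $\binom{n+1}{2} - 3$, while its apolar dual $V^\perp \subseteq T_2$ has dimension exactly $3$. So the data is governed by a $3$-dimensional space of quadrics in the dual variables $u_1, \dots, u_n$. The first reduction is to use Proposition~\ref{KoszulTFE} to assume $S/I$ has no trivial fiber extension, i.e. no null-square linear form of rank $0$; by Lemma~\ref{linear_forms_and_points} this amounts to controlling the base locus $\mathcal{V}(V)$ and $\mathcal{V}(V^\perp)$. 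The plan would be to bound $n$ from above: since $\dim_{\mathbbm{k}}(S/I)_2 = 3$ is small, large embedding dimension forces many linear relations, and a Macaulay-type bound on the growth of the Hilbert function should show that, up to trivial fiber extension, only $n \le 4$ (and really $n = 3$ for the exceptional algebras) can occur. This is the structural backbone, and it dovetails with the byproduct Theorem~\ref{hilblist} on possible Hilbert series.

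<br>

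Next I would classify the $3$-dimensional spaces $V^\perp \subseteq T_2$ of quadrics up to the $\GL_n(\mathbbm{k})$-action, which by the remark after Lemma~\ref{EhrenborgRotaLemma} corresponds (via inverse-transpose) exactly to the change-of-coordinates action on $V$. Over an algebraically closed field of characteristic $\neq 2$, nets of conics (in the case $n = 3$) and their higher-dimensional analogues admit a classical finite classification via simultaneous diagonalization / Segre symbols, and I would run through the canonical forms. For each normal form I would try to exhibit positive evidence of Koszulness using the three sufficient conditions in Theorem~\ref{suffcondkoszul}: first look for an integral weight $\omega$ making $S/in_\omega(I)$ have a squarefree or otherwise manifestly Koszul initial ideal (using Remark~\ref{initialrem} to pin down $in_\omega(I)$ by a Hilbert-series comparison), failing that construct an explicit Koszul filtration $\mathfrak{F}$, and failing both check G-quadraticness after a change of coordinates. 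Remark~\ref{apolarityrem} gives a clean way to write down the generators $F_m = m - \sum a_i m_i$ from the dual basis $\{G_1, G_2, G_3\}$, which makes these initial-ideal and filtration computations tractable. The three exceptional algebras~(i)--(iii) are exactly the normal forms where all three sufficient conditions fail, and for those I would invoke (or reprove) the Backelin--Fr\"oberg computation from \cite{BackFrPoin} showing the Poincar\'e series is irrational or otherwise non-linear, hence non-Koszul.

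<br>

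The main obstacle I expect is twofold. First, the classification of the $3$-dimensional families $V^\perp$ up to projective equivalence must be exhaustive and characteristic-sensitive: the split between characteristic $3$ and other characteristics (which forces (ii) and (iii) apart) will surface precisely when diagonalizing or normalizing certain pencils, so the canonical-form analysis has to track the field characteristic carefully rather than assuming generic behaviour. Second, and harder, is proving Koszulness in the \emph{borderline} cases where no single monomial weight order works: there the burden falls on constructing a Koszul filtration by hand, and verifying the colon-ideal closure condition $J :_R I \in \mathfrak{F}$ for every member of the family is where the real work lies, since one must guess the correct family $\mathfrak{F}$ and then check it is closed under the required operations. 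The apolarity dictionary (Lemma~\ref{linear_forms_and_points}) should help here by translating the null-square linear forms and rank conditions into the geometry of $\mathcal{V}(V)$, guiding which principal ideals generate a valid filtration.
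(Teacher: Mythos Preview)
Your toolkit is right --- apolarity, weights, Koszul filtrations, and the Backelin--Fr\"oberg list for the exceptions are exactly what the paper uses --- but the organizing principle you propose has two genuine gaps.

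First, the claim that ``up to trivial fiber extension, only $n \le 4$ can occur'' is false. For every $n \ge 3$ there exist quadratic algebras with $\dim_{\mathbbm{k}}R_2 = 3$ that are not trivial fiber extensions: e.g.\ the algebras with Hilbert series $\mathbf{B}_n$ arising in Section~\ref{twopoints} (case $\rk Q \ge 2$) live in $n$ variables for arbitrary $n$ and have no linear form $\ell$ with $\ell R_1 = 0$. No Macaulay-type bound collapses the embedding dimension here; the paper never attempts such a reduction.

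Second, and relatedly, the ``classical finite classification via simultaneous diagonalization / Segre symbols'' of $3$-dimensional subspaces $V^\perp \subseteq T_2$ does not exist for $n \ge 4$. Segre symbols classify \emph{pencils} of quadrics, not nets, and already nets of quadrics in $\mathbb{P}^3$ have continuous moduli under $\GL_4$. So a direct normal-form enumeration is infeasible.

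The paper sidesteps both issues by a different decomposition. It first cites Conca's Artinian result \cite{Conca2009} and the three-variable classification \cite{BackFrPoin} to reduce to the non-Artinian case with $n \ge 4$. Then it observes (via Lemma~\ref{linear_forms_and_points}) that non-Artinian plus non-trivial-fiber-extension forces $\mathcal{V}(I)$ to consist of exactly one or two points, and that null-square linear forms have rank $1$ or $2$. This gives three top-level cases (Table~\ref{outline}). Within each, one writes $V^\perp = \vspan\{x^2,\, G_2,\, G_3\}$ (or $\{x^2, y^2, G_3\}$) and subdivides by the rank of the residual quadric $Q$ appearing in $G_3$ after stripping off the $x$- and $y$-parts. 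The trivial-fiber-extension hypothesis then forces $n$ to be small \emph{only in the low-rank subcases}, while the high-rank subcases are handled for arbitrary $n$ by exhibiting a quadratic initial ideal directly. This is what produces the finite list in Table~\ref{koszultable} without ever classifying nets of quadrics in full.
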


As a byproduct of our proof of Theorem \ref{mainthm} and of previous work by Backelin and Fr\"oberg \cite[proof of Theorem 1]{BackFrPoin} and Conca \cite[Theorem 1.1]{Conca2009}, we shall also get a classification of all possible Hilbert series for standard graded quadratic $\mathbbm{k}$-algebras $R$ such that $\dim_{\mathbbm{k}}R_2 = 3$, see Theorem \ref{hilblist} below.

\begin{rem} \label{ShortArtinian}
Backelin and Fr\"oberg \cite[proof of Theorem 1 and Appendix]{BackFrPoin} gave without proof a classification (in all characteristics) of standard graded quadratic algebras $R$ such that $\dim_{\mathbbm{k}}R_1 = \dim_{\mathbbm{k}}R_2 = 3$ and $R$ is not a complete intersection. Unfortunately, in characteristic 2 their list is incomplete, since at least one (non-Koszul) case is missing, see (b) below. However, when the characteristic is different from 2, we proved that their list is indeed correct: this will be the object of a later arXiv note.

Under these hypotheses, the only non-Koszul algebras in characteristic different from 2 are the ones denoted by (i), (ii) and (iii) in our Theorem \ref{mainthm}. Two further remarks have to be made:
\begin{itemize}
\item[(a)] if $\fchar \mathbbm{k} \neq 3$, then (ii) and (iii) are isomorphic as graded $\mathbbm{k}$-algebras: one goes from (iii) to (ii) by the change of coordinates sending $x$ into $x-y/9-z/3$ and $z$ into $-y/3+z$. On the other hand, if $\fchar \mathbbm{k} = 3$, then (ii) and (iii) are not isomorphic. For instance, consider the polynomials $P_{\text{(ii)}}$ and $P_{\text{(iii)}}$ defining the two projective hypersurfaces given by $\{[\alpha, \beta, \gamma] \in \mathbb{P}^2_{\mathbbm{k}} \mid \alpha Q_1 + \beta Q_2 + \gamma Q_3\text{ has rank} \leq 2\}$, where the $Q_i$'s are the generators of the defining ideal of respectively (ii) and (iii). Since the characteristic is $3$, the two vector spaces generated by the first partial derivatives of respectively $P_{\text{(ii)}}$ and $P_{\text{(iii)}}$ have different dimensions and thus (ii) and (iii) cannot be isomorphic.  

\item[(b)] in \cite{BackFrPoin} one finds the algebra (i') $\mathbbm{k}[x, y, z]/(xz, xy+yz, x^2+z^2+xy)$, which is seen to be isomorphic to (i) as a graded $\mathbbm{k}$-algebra when $\fchar\mathbbm{k} \neq 2$. If $\fchar\mathbbm{k}=2$, though, (i') admits no null-square linear forms, whereas $y+z$ is a null-square linear form for (i): hence, the two algebras are not isomorphic in characteristic 2, but the algebra (i) is not featured in \cite{BackFrPoin}.
\end{itemize}
\end{rem}

\begin{thm} \label{hilblist}
Let $\mathbbm{k}$ be an algebraically closed field of characteristic different from 2, let $S$ the polynomial ring $\mathbbm{k}[x_1, \ldots, x_n]$ and $I \subseteq \mathfrak{m}^2$ a quadratic homogeneous ideal of $S$. If $\dim_{\mathbbm{k}}(S/I)_2 = 3$, these are the possible Hilbert series for $S/I$:

\vspace{5pt}
\renewcommand\arraystretch{1.4}
\begin{center}
    \begin{tabular}{| l | l | l}
    \cline{1-2}
    \multicolumn{1}{|c|}{\emph{Rational form}} & \multicolumn{1}{|c|}{\emph{Hilbert function}} & \\
    \cline{1-2}
    $1+nz+3z^2$                                 & $1~~n~~3$ & $(n \geq 4)$ \\ 
    $1+nz+3z^2+z^3$                            & $1~~n~~3~~1$ & $(n \geq 3)$ \\
    $(1+(n-1)z+(3-n)z^2-2z^3)/(1-z)$           & $1~~n~~3~~1~~1~~1~~1~~1\ldots$ & $(n \geq 3)$ \\
    $(1+(n-1)z+(3-n)z^2-z^3)/(1-z)$            & $1~~n~~3~~2~~2~~2~~2~~2\ldots$ & $(n \geq 3)$ \\
    $(1+(n-1)z+(3-n)z^2)/(1-z)$                 & $1~~n~~3~~3~~3~~3~~3~~3\ldots$ & $(n \geq 3)$ \\
    $(1+(n-2)z+(4-2n)z^2+(n-2)z^3)/(1-z)^2$ & $1~~n~~3~~4~~5~~6~~7~~8\ldots$ & $(n \geq 2)$ \\
    \cline{1-2}
    \end{tabular}
\end{center}
\vspace{5pt}
\renewcommand\arraystretch{1}
\end{thm}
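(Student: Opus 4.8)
The plan is to reduce everything to a single integer, namely $\dim_{\mathbbm{k}}(S/I)_3$, and a tail that is controlled first by Macaulay's growth bound and Gotzmann persistence, and then by the structural classification underlying Theorem \ref{mainthm} (supplemented by Conca's treatment of the Artinian case \cite{Conca2009} and Backelin--Fr\"oberg's treatment of embedding dimension three \cite{BackFrPoin}). The low-degree values are forced: $\dim_{\mathbbm{k}}(S/I)_0 = 1$, and since $I \subseteq \mathfrak{m}^2$ contains no linear form we have $\dim_{\mathbbm{k}}(S/I)_1 = n$, while $\dim_{\mathbbm{k}}(S/I)_2 = 3$ by hypothesis. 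Thus every admissible Hilbert series begins $1,\ n,\ 3$, which already accounts for the first three columns of each row and for the $n$-dependence of the numerators; this is consistent with Proposition \ref{KoszulTFE}, since a trivial fiber extension increases $n$ by one, adds a single $z$ to the series, and leaves all higher-degree dimensions unchanged, so it suffices to understand the series up to this operation.

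Next I would pin down the Krull dimension and the shape of the tail purely numerically. Writing $3 = \binom{3}{2}$, Macaulay's bound gives $\dim_{\mathbbm{k}}(S/I)_3 \le 3^{\langle 2\rangle} = \binom{4}{3} = 4$. If this maximum is attained, then $4 = \binom{4}{3}$ is maximal growth and Gotzmann persistence forces maximal growth thereafter, so $\dim_{\mathbbm{k}}(S/I)_d = \binom{d+1}{d} = d+1$ for all $d \ge 2$; a short computation rewrites this as the last row (Krull dimension two). If instead $\dim_{\mathbbm{k}}(S/I)_3 \le 3$, then since $3^{\langle d\rangle} = 3$ for every $d \ge 3$, Macaulay's bound propagates to give $\dim_{\mathbbm{k}}(S/I)_d \le 3$ for all $d$; hence $S/I$ has Krull dimension at most one, and when it is exactly one its Hilbert function is eventually equal to the multiplicity $e = \deg(\mathcal{V}(I)) \le 3$. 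All of this is characteristic-free.

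It then remains, in the dimension-$\le 1$ range, to identify $\dim_{\mathbbm{k}}(S/I)_3$ and the single transient step $\dim_{\mathbbm{k}}(S/I)_3 \to \dim_{\mathbbm{k}}(S/I)_4$: once the latter is seen to be constant, Gotzmann persistence (now in the regime where $3\to 3$, $2\to 2$ and $1\to 1$ are all maximal) closes the tail, and one reads off rows one through five, the only remaining freedom being one further drop to $0$ when $\dim_{\mathbbm{k}}(S/I)_3 = 1$ (this produces the split between rows one and two, and between rows two and three). Here apolarity is the organizing tool: $\mathcal{V}(I)$ is cut out by $I_2 = W^{\perp}$ for a three-dimensional space of quadrics $W \subseteq T_2$, and by Lemma \ref{linear_forms_and_points} its points correspond to the nonzero linear forms $\ell$ with $\ell^2 \in W$; as the squares of pairwise non-proportional linear forms are linearly independent (this is where $\fchar\mathbbm{k} \neq 2$ enters) and $\dim_{\mathbbm{k}}W = 3$, the reduced locus has at most three points. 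The exact value of $e$, and the fact that the Hilbert function stabilizes immediately, is precisely the output of the normal-form classification carried out for Theorem \ref{mainthm}: in each case the full multiplication table of $S/I$ is determined, so its Hilbert series can simply be computed. Conca \cite{Conca2009} supplies the Artinian rows and Backelin--Fr\"oberg \cite{BackFrPoin} the embedding-dimension-three normal forms. Finally, exhibiting one explicit algebra per row (for instance $\mathbbm{k}[x,y]$ realizes the last row for $n=2$, and trivial fiber extensions realize every admissible $n$) shows that all six series genuinely occur.

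I expect the main obstacle to be exactly this stabilization step: ruling out a strict drop from $\dim_{\mathbbm{k}}(S/I)_3$ to $\dim_{\mathbbm{k}}(S/I)_4$ when $\dim_{\mathbbm{k}}(S/I)_3 \in \{2,3\}$, that is, excluding Hilbert functions such as $1,n,3,3,2,\dots$ or $1,n,3,2,1,\dots$. Macaulay's theorem only bounds growth from above, and Gotzmann persistence is triggered solely by maximal growth, so this non-decrease cannot be extracted from numerics alone: it requires the geometry of the net $W$ and the scheme structure of $\mathcal{V}(I)$, i.e.\ the heavy case analysis shared with the proof of Theorem \ref{mainthm}.
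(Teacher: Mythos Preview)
Your approach coincides with the paper's: Theorem \ref{hilblist} is presented there simply as a byproduct of the normal-form classification behind Theorem \ref{mainthm} (together with \cite{Conca2009} for the Artinian range and \cite{BackFrPoin} for embedding dimension three), and you correctly recognise that the decisive step---ruling out drops such as $1,n,3,3,2,\ldots$ or $1,n,3,2,1,\ldots$---cannot be settled numerically and must come from that case analysis. The Macaulay/Gotzmann scaffolding you add (bounding $h_3 \le 4$, invoking persistence when $h_3 = 4$ or once $h_3 = h_4 \le 3$ is established) is a clean organisational layer that the paper does not make explicit; the paper just reads the six series off the explicit algebras in Table \ref{koszultable} and the cited references.

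One imprecision worth correcting: the assertion that ``the squares of pairwise non-proportional linear forms are linearly independent'' is false as stated---four pairwise non-proportional linear forms in two variables already give a dependence. What is true, and sufficient for your bound on $\lvert\mathcal{V}(I)\rvert$ in the zero-dimensional case, is that \emph{three} such squares are independent when $\fchar\mathbbm{k} \neq 2$; if the three points were collinear the whole line would lie in $\mathcal{V}(I)$, while if they are non-collinear then $\ell_1^2,\ell_2^2,\ell_3^2$ span $W$ and an easy check shows no fourth non-proportional $\ell_4$ can have $\ell_4^2$ in that span. Since you defer to the classification anyway, this slip does not damage the argument.
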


\begin{rem}
Theorem \ref{hilblist} agrees with the list obtained, when $n$ equals 4 and $\fchar\mathbbm{k} = 0$, by Roos \cite[Appendix 1, Table 1]{Roos}. The Hilbert series of Theorem \ref{hilblist} are denoted by $H_7$ to $H_{12}$ there.
\end{rem}

\subsection{An outline of the proof of Theorem \ref{mainthm}}
\begin{table}[h!]
\caption{An outline of the proof strategy for Theorem \ref{mainthm}}
\label{outline}
\begin{mdframed}
Let $\mathbbm{k}$ be an algebraically closed field of characteristic different from 2 and let $S = \mathbbm{k}[x_1, \ldots, x_n]$, where $n \geq 4$. Let $I \subseteq \mathfrak{m}^2$ be a quadratic ideal of $S$ such that $\dim_{\mathbbm{k}}(S/I)_2 = 3$ and $S/I$ is not a trivial fiber extension.

\begin{itemize}
\item If the variety $\mathcal{V}(I)$ set-theoretically consists of two points, then $S/I$ is Koszul (Section \ref{twopoints}). 
\item If the variety $\mathcal{V}(I)$ set-theoretically consists of a point and there exists a null-square linear form of rank 2 in $S/I$, then $S/I$ is Koszul (Section \ref{onepointrank2}).
\item If the variety $\mathcal{V}(I)$ set-theoretically consists of a point and all null-square linear forms in $S/I$ have rank 1, then $S/I$ is Koszul (Section \ref{onepointrank1}).
\end{itemize}
\end{mdframed}
\end{table}

\begin{prop}
To prove Theorem \ref{mainthm}, it is enough to prove the statements in Table \ref{outline}.
\end{prop}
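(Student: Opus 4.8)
The plan is to peel off trivial fiber extensions first, dispose of small embedding dimension using existing classifications, and then show that the three geometric cases of Table~\ref{outline} exhaust what remains. Given $S/I$ with $\dim_{\mathbbm{k}}(S/I)_2 = 3$, I would repeatedly apply Proposition~\ref{KoszulTFE}: if $\ell \in (S/I)_1$ satisfies $\ell (S/I)_1 = 0$, then passing to $(S/I)/{\ell}$ preserves both Koszulness and the value $\dim_{\mathbbm{k}}(\cdot)_2 = 3$ (the ideal $(\ell)$ contributes nothing in degree $2$) while lowering the embedding dimension by one. Iterating, we reach a non-trivial-fiber-extension \emph{core} $R = \mathbbm{k}[x_1,\dots,x_n]/I$ with the same Koszul status. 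Since the algebras (i)--(iii) are themselves not trivial fiber extensions (a linear form killing their whole degree-$1$ part would force the core to be $\mathbbm{k}[x,y]$, which is Koszul, contradicting their non-Koszulness), being isomorphic up to trivial fiber extension to one of (i)--(iii) is the same as the core being isomorphic to one of them. Everything therefore reduces to analysing $R$, split according to its embedding dimension $n$.

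When $n \le 3$ I would invoke prior work. If $n \le 2$ then necessarily $n=2$ and $R = \mathbbm{k}[x,y]$, which is Koszul and, having the wrong embedding dimension, is not among (i)--(iii). If $n = 3$ then $\dim_{\mathbbm{k}}I_2 = 3$: when these quadrics form a regular sequence $R$ is a quadratic complete intersection, hence Koszul and not among the (non-complete-intersection) algebras (i)--(iii); otherwise $R$ falls under the Backelin--Fr\"oberg classification recalled in Remark~\ref{ShortArtinian}, whose only non-Koszul members in characteristic different from $2$ are exactly (i)--(iii). This also supplies the direction of Theorem~\ref{mainthm} asserting that (i)--(iii) are non-Koszul, which is taken as input from \cite{BackFrPoin}.

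The heart of the reduction is the case $n \ge 4$, where the claim becomes that $R$ is always Koszul (and, having embedding dimension $\ne 3$, is never isomorphic up to trivial fiber extension to (i)--(iii)). If $R$ is Artinian, i.e. $\mathcal{V}(I) = \emptyset$, it is Koszul by Conca's theorem \cite{Conca2009}. Otherwise $\mathcal{V}(I) \ne \emptyset$, and I would show its set-theoretic support consists of at most two points, so that Table~\ref{outline} applies: the two-point case is covered directly, while in the one-point case one subdivides according to whether a null-square form of rank $2$ exists. Such forms do exist here, since by Lemma~\ref{linear_forms_and_points} they correspond to the base locus $\mathcal{V}(V^{\perp})$ of the $3$-dimensional apolar net $V^{\perp}$, and three quadrics in $\mathbb{P}^{n-1}$ with $n-1 \ge 3$ have nonempty common zero locus. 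Moreover a null-square form cannot have rank $0$ (that would make $R$ a trivial fiber extension) nor rank $3$ (if $\ell^2 = 0$ and $\ell R_1 = R_2$ then $R_3 = \ell^2 R_1 = 0$, forcing $R$ Artinian), so in the non-Artinian case every null-square form has rank $1$ or $2$ and the subdivision is exhaustive.

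The main obstacle is thus the bound on the number of points of $\mathcal{V}(I)$, which I would obtain by ruling out the ``exotic'' configurations through a trivial-fiber-extension argument. Macaulay's classical growth bound applied to $\dim_{\mathbbm{k}}R_2 = 3$ yields $\dim_{\mathbbm{k}}R_d \le d+1$ for all $d$, so a positive-dimensional $\mathcal{V}(I)$ is forced to be a reduced line; and if $\mathcal{V}(I)$ is finite with at least three distinct points then no three are collinear (three collinear points would force the whole line into $\mathcal{V}(I)$, since a nonzero quadric cannot vanish at three points of a $\mathbb{P}^1$), so three of them span a $\mathbb{P}^2$. In either scenario the surjection from $R$ onto the homogeneous coordinate ring of the reduced configuration (the line, respectively the three points) is an isomorphism in degree $2$, both degree-$2$ pieces having dimension $3$; hence $I_2$ coincides with the degree-$2$ part of the ideal of that configuration. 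Writing the line as $\{x_3 = \dots = x_n = 0\}$ or the three points as $e_1, e_2, e_3$, one reads off that $I_2$ contains $x_i x_j$ for every $j$ as soon as $i \ge 3$ (resp. $i \ge 4$), whence $x_i R_1 = 0$ and $R$ is a trivial fiber extension, contradicting that $R$ is a core. Consequently $\mathcal{V}(I)$ is supported on at most two points, the three cases of Table~\ref{outline} are exhaustive, and granting their conclusions proves Theorem~\ref{mainthm}. The subtle points to handle carefully are the scheme-theoretic bookkeeping (a degree-$3$ scheme supported on one or two points is allowed and stays within the Table) and the verification that the degree-$2$ identification genuinely yields the vanishing $x_i R_1 = 0$.
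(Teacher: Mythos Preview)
Your reduction is structurally identical to the paper's: strip trivial fiber extensions, handle $n\le 3$ via the Backelin--Fr\"oberg list and the complete-intersection case, invoke Conca's Artinian theorem for $n\ge 4$, and then show that a non-Artinian non-TFE core with $n\ge 4$ has $\mathcal{V}(I)$ supported on at most two points and all null-square forms of rank $1$ or $2$. Where you diverge is in how you justify these last two facts. The paper bounds $|\mathcal{V}(I)|$ via apolarity: a third point contributes a third square (or, if collinear with the first two, the mixed product $xy$) to the $3$-dimensional space $V^{\perp}$, which pins $V^{\perp}$ down completely and exhibits a variable annihilating $R_1$. Your Hilbert-function comparison with the coordinate ring of a line or of three points spanning a plane is a correct alternative; note, though, that the Macaulay-bound detour and the assertion that a positive-dimensional $\mathcal{V}(I)$ is ``forced to be a reduced line'' are unnecessary (and the latter would need its own argument): it suffices to pick any three points of $\mathcal{V}(I)$, observe that three collinear ones drag the whole line in, and then run your line/plane dichotomy. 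For the existence of a null-square form and the exclusion of rank $3$ the paper simply cites \cite[Lemmas 2 and 3]{Conca2000}, whereas you give self-contained arguments (the projective-dimension count for $\mathcal{V}(V^{\perp})\neq\emptyset$ and the observation that $\ell R_1=R_2$ with $\ell^2=0$ gives $R_3=\ell R_2=\ell^2 R_1=0$); both are valid and make the reduction more self-contained than the paper's version.
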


\begin{proof}
Let $R$ be the standard graded $\mathbbm{k}$-algebra $S/I$. We want to investigate Koszulness when $\dim_{\mathbbm{k}}R_2 = 3$. Conca proved in \cite{Conca2009} that, if we further ask that $R$ is Artinian, we always get a Koszul algebra (moreover, we get a G-quadratic algebra ``almost always''). He also gave a classification of the general case when $\dim_{\mathbbm{k}}R_1 \leq 3$, showing that, at least in characteristic 0, the only non-Koszul algebras are the ones appearing in the statement of Theorem \ref{mainthm}. This last statement holds also in characteristic greater than 2: see Remark \ref{ShortArtinian} above for more details.

We are thus interested in finding out what happens for four or more variables in the non-Artinian case. By Proposition \ref{KoszulTFE}, we can further assume we are dealing with an algebra which is not a trivial fiber extension.

Let us assume $S$ contains the variables $x, y, z, t$ and let us consider $\mathcal{V}(I)$. We know that, since $R$ is not Artinian, this variety is not empty: without loss of generality and recalling Lemma \ref{linear_forms_and_points}, we can assume $x^2$ is in $V^{\perp}$. If the variety contains another point, we can further assume $y^2$ is in $V^{\perp}$. What would happen if a third point were present? If the point did not lie on the line spanned by the first two points, then we could assume $z^2$ were in $V^{\perp}$, but then $V^{\perp}$ would be generated by $x^2$, $y^2$ and $z^2$ and hence $R$ would be a trivial fiber extension, since $tR_1$ would be zero. On the other hand, if the third point were in fact on the line, then $xy$ would be in $V^{\perp}$ and $R$ would again be a trivial fiber extension. Therefore, the variety $\mathcal{V}(I)$ set-theoretically consists of either one or two points.

By \cite[Lemma 3]{Conca2000}, we know that there exists at least one null-square linear form in $R$. Since we are excluding trivial fiber extensions, we cannot find any null-square linear form in $R$ whose rank is 0. Moreover, a null-square linear form of rank 3 can exist only if $R$ is Artinian by \cite[Lemma 2]{Conca2000}.
\end{proof}

Before starting the proof of Theorem \ref{mainthm}, we insert here a table of algebras that will come in handy in what follows. In Table \ref{koszultable} and in the rest of the paper we will denote by $\mathbf{A}_n$ the Hilbert series 
\[\frac{1+(n-1)z+(3-n)z^2-2z^3}{1-z} = 1 + nz + 3z^2 + z^3 + z^4 + \ldots\]
and by $\mathbf{B}_n$ the Hilbert series
\[\frac{1+(n-1)z + (3-n)z^2 - z^3}{1-z} = 1 + nz + 3z^2 + 2z^3 + 2z^4 + \ldots\]

\renewcommand\arraystretch{1.2}
\begin{table}[h]
\centering
\caption{Some useful algebras. All ideals are considered into the smallest polynomial ring $S$ over $\mathbbm{k}$ which contains their variables. Since all ideals in the table are unital binomial ideals (see Appendix \ref{appendixkoszul}), their Hilbert series do not depend on the characteristic of $\mathbbm{k}$. Note that (10) is not unital binomial itself but, if $\fchar\mathbbm{k} \neq 2$, it becomes so after a suitable change of coordinates.}
\label{koszultable}
    \begin{tabular}{|c|l|c|}
    \hline
       \#    & \multicolumn{1}{|c|}{Generators of $I$} & $\HS(S/I)$ \\
    \hline
        (1) & $t_2^2, t_3^2, t_2t_3, xy, xt_1, xt_3, yt_1, yt_2, t_1t_2, t_1t_3, xt_2-t_1^2, yt_3-t_1^2$ & $\mathbf{B}_5$\\
    (2) & $t_2^2, xt_1, t_1t_2, t_1^2-xt_2, xy, yt_1, yt_2$ & $\mathbf{B}_4$ \\
    (3) & $t_1^2, t_2^2, t_1t_2, yt_1, xt_2, xy, xt_1-yt_2$ & $\mathbf{B}_4$ \\
    (4) & $t^2, y_2t, xy_1, xy_2, y_1^2, y_1y_2, y_2^2-xt$ & $\mathbf{A}_4$ \\
    (5) & $t^2, y_2t, xy_1, xy_2, y_1^2-xt, y_1y_2, y_2^2-y_1t$ & $\mathbf{A}_4$ \\
    (6) & $t^2, y_2t, y_3t, xy_1, xy_2, xy_3, y_1^2-xt, y_1y_2, y_1y_3, y_2^2, y_2y_3-y_1t, y_3^2$ & $\mathbf{A}_5$ \\
    (7) & $t^2, y_2t, xy_1, xy_2, y_1^2-xt, y_1t, y_2^2$ & $\mathbf{A}_4$ \\
    (8) & $t^2, y_2t, xy_1, xy_2-y_1t, y_1^2-xt, y_1y_2, y_2^2$ & $\mathbf{A}_4$ \\
    (9) & $xy, xt_1, xt_2-y^2, xt_3-t_1^2, yt_1, yt_2, yt_3, t_1t_2, t_1t_3, t_2^2, t_2t_3, t_3^2$ & $\mathbf{A}_5$ \\
    (10) & $xy, xt_1, xt_2-y^2-t_1^2, yt_1, yt_2, t_1t_2, t_2^2$ & $\mathbf{A}_4$ \\
    (11) & $xy-t_1^2, xt_1, xt_2-y^2, yt_1, yt_2, t_1t_2, t_2^2$ & $\mathbf{A}_4$ \\
    (12) & $xy, xt_1-y^2, xt_2-t_1^2, xt_3, yt_1, yt_2, yt_3-t_1^2, t_1t_2, t_1t_3, t_2^2, t_2t_3, t_3^2$ & $\mathbf{A}_5$ \\
    (13) & $xy, y^2-xt_1, yt_1, yt_2, t_1^2, t_1t_2, t_2^2$       & $\mathbf{B}_4$ \\
    \hline
    \end{tabular}
\end{table}
\renewcommand\arraystretch{1}

\begin{lemma} \label{allkoszul}
All algebras in Table \ref{koszultable} admit a Koszul filtration and hence are Koszul.
\end{lemma}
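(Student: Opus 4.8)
The plan is to produce, for each of the thirteen algebras $R = S/I$ listed in Table \ref{koszultable}, an explicit Koszul filtration $\mathfrak{F}$; by the first bullet of Theorem \ref{suffcondkoszul} this immediately yields Koszulness. Since we are dealing with finitely many algebras, each presented by a concrete set of quadratic generators, the statement reduces to a finite (if somewhat lengthy) verification. In every case the natural candidate for $\mathfrak{F}$ is the family of ideals generated by subsets of the variables of $S$, ordered in a convenient way, possibly enlarged by a handful of further ideals generated by other linear forms, should the colon conditions demand it.

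Concretely, for each algebra I would fix a linear order on the variables and let $\mathfrak{F}$ contain $(0)$, $(R_1)$ and the ideals $(\ell_1, \ldots, \ell_r)$ generated by initial segments of the chosen list of linear forms. The first two requirements in the definition of a Koszul filtration are then automatic, and every member is generated in degree one. To verify the third requirement for a nonzero $L = (\ell_1, \ldots, \ell_r) \in \mathfrak{F}$, I would take $\ell = \ell_r$ and $J = (\ell_1, \ldots, \ell_{r-1})$, so that $J \subseteq L$, the quotient $L/J$ is cyclic, and $J :_R L = J :_R \ell$. The remaining task is thus to compute the colon ideal $J :_R \ell$ and recognise it as a member of $\mathfrak{F}$. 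This last computation is pure degree-by-degree linear algebra: one determines which linear forms $g$ satisfy $g\ell \in J$ modulo $I$ in degree two. Since all the ideals in the table are unital binomial, a monomial $\mathbbm{k}$-basis of $(S/I)_2$ is available from Remark \ref{apolarityrem}, which makes these degree-two computations entirely explicit and, crucially, independent of the characteristic of $\mathbbm{k}$.

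The main obstacle is ensuring closure under colons: a priori $J :_R \ell$ might fail to be generated by linear forms, or might be a linear ideal not yet present in $\mathfrak{F}$. The family must therefore be engineered so that every colon that arises lands back inside it. In practice this is achieved by choosing the order on the variables so that the vanishing products $\ell \cdot x_j$ in $J$ are triggered in a controlled manner, and by adjoining to $\mathfrak{F}$ the finitely many extra linear ideals the colons produce, then checking that these too satisfy the third condition; the process terminates because all the ideals involved live inside the fixed finite-dimensional space $R_1$. The bookkeeping is considerably lightened by the observation that the thirteen algebras fall into the two Hilbert-series families $\mathbf{A}_n$ and $\mathbf{B}_n$ and share large portions of their combinatorial structure, so that a filtration devised for one algebra typically transports, with minor modifications, to its relatives; the individual computations can moreover be double-checked with \cocoa. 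The resulting filtrations are purely combinatorial certificates, valid uniformly whenever the binomial structure is available (for algebra (10), after the change of coordinates mentioned in the caption of Table \ref{koszultable}), and hence establish Koszulness in every characteristic different from $2$.
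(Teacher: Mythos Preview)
Your approach is essentially that of the paper (Appendix \ref{appendixkoszul}): exhibit an explicit Koszul filtration for each algebra, built mostly from ideals generated by subsets of the variables, with a couple of extra linear forms thrown in where needed.

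The one soft spot is your claim that verifying $J :_R \ell \in \mathfrak{F}$ reduces to ``degree-by-degree linear algebra in degree two'' and is characteristic-free via the unital binomial structure. Computing which linear forms $g$ satisfy $g\ell \in J$ only identifies $(J :_R \ell)_1$; it does not by itself show the colon is \emph{generated} in degree one, and higher-degree generators could in principle appear and could depend on the characteristic. (Your appeal to Remark \ref{apolarityrem} is also off-target: that remark concerns the apolarity pairing and presupposes $\fchar\mathbbm{k}$ zero or large.) The paper closes this gap with a short lemma (Lemma \ref{unitalbinomial}): for a unital binomial ideal the colon by a monomial is again unital binomial, and a set of monomials and pure-difference binomials that generates in one characteristic generates in all. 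From this it follows (Corollary \ref{monomialfiltrations}) that a monomial Koszul filtration valid in characteristic zero---checkable mechanically with \cocoa---is valid in every characteristic. Algebras (10) and (11), whose filtrations involve non-monomial linear forms such as $x+t_1$ and $y-t_1$, then require a handful of colon computations to be carried out by hand in arbitrary characteristic, using Hilbert-series comparisons to certify that the colons are indeed generated by the expected linear forms.
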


\begin{proof} 
See Appendix \ref{appendixkoszul}.
\end{proof}

We are now ready to begin.

\section{Proof of Theorem \ref{mainthm}} \label{mainthmproof}
The aim of this section is to prove the statements contained in Table \ref{outline}. Let us set the notation for the rest of the section.
 
Let $\mathbbm{k}$ be an algebraically closed field of characteristic different from 2, let $S$ be a polynomial ring over $\mathbbm{k}$ in $n$ variables and let $I$ be a quadratic homogeneous ideal of $S$ not containing any linear form. Let $R$ be the quotient algebra $S/I$. We will denote by $V$ the vector subspace of $S_2$ generated by the quadrics of $I$ and by $V^{\perp}$ its orthogonal with respect to the apolarity pairing defined in Section \ref{apolarity}. Lowercase Latin letters different from $x$, $y$, $z$ and $t$ (possibly with indices) will be used to denote elements of $\mathbbm{k}$; moreover, we will use ``*'' to denote coefficients in $\mathbbm{k}$ whose value plays no role in the discussion.

\subsection{The variety $\mathcal{V}(I)$ set-theoretically consists of two points} \label{twopoints}
Let $S = \mathbbm{k}[x, y, t_1, \ldots, t_{n-2}]$. We can assume without loss of generality that \[\mathcal{V}(I) = \{[1, 0, 0, \ldots, 0], [0, 1, 0, \ldots, 0]\}\] and hence $x^2$ and $y^2$ do not appear in any generator of $I$.
Then $I \subseteq (xy) + (t_1, \ldots, t_{n-2})S_1$ and, since we also know that $I$ is generated by $n-3$ quadrics, we have that the Hilbert series of $S/I$ is coefficientwise greater than or equal to $\mathbf{B}_n$. Moreover, \[V^{\perp} = \vspan\{x^2, y^2, xL_1 + yL_2 + Q(t_1, \ldots, t_{n-2})\},\] where $L_1$ is a linear form not containing $x$, $L_2$ is a linear form not containing $y$ and $Q$ is a quadratic form in the remaining variables $t_1, \ldots, t_{n-2}$. We are going to distinguish some cases according to the rank of the quadric $Q$ (call this number $r$).

\subsubsection{The rank of $Q$ is greater than or equal to 2} \label{firstcase}
After a suitable change of coordinates we can assume $Q = 2t_1t_2 + t_3^2 + \ldots + t_r^2$.
Then, applying Remark \ref{apolarityrem}, the generators of $I$ are
\begin{equation*}
\begin{split}
&t_1^2, t_2^2, xy-*t_1t_2,\\
&xt_i-*t_1t_2, yt_i-*t_1t_2\textrm{ for all }i \in \{1, \ldots, n-2\},\\
&t_i^2 - t_1t_2\textrm{ for all }i \in \{3, \ldots, r\},\\
&t_i^2 \textrm{ for all }i \in \{r+1, \ldots, n-2\},\\
&t_it_j\textrm{ for all }i, j \in \{1, \ldots, n-2\}\textrm{ such that }i < j, (i, j) \neq (1, 2).
\end{split}
\end{equation*}

Let $\tau$ be a term order such that $t_1 <_{\tau} t_2 <_{\tau} v$ for any variable $v$ different from $t_1$ and $t_2$. Then the ideal \[J := (t_3, \ldots, t_{n-2})S_1 + (t_1^2, t_2^2, xy, xt_1, xt_2, yt_1, yt_2)\] is contained in $in_{\tau}(I)$ and, since the Hilbert series of $S/J$ is $\mathbf{B}_n$, in fact $in_{\tau}(I) = J$. (Note that this is precisely the setting of Remark \ref{initialrem}.) Hence, $R$ is G-quadratic.

\subsubsection{The rank of $Q$ is 1}
After a suitable change of coordinates we can assume $Q = t_1^2$. Since we want to exclude trivial fiber extensions, at least one of $L_1$ and $L_2$ must contain $t_2$: without loss of generality, let $L_1$ contain $t_2$. Then, after a change of coordinates, we can assume $L_1 = 2t_2$.

\begin{itemize}
\item Assume $S$ has more than four variables: then we can further assume $L_2 = 2t_3$. On the other hand, $S$ cannot have more than five variables, otherwise $R$ would be a trivial fiber extension: hence \[I = (t_2^2, t_3^2, t_2t_3, xy, xt_1, xt_3, yt_1, yt_2, t_1t_2, t_1t_3, xt_2-t_1^2, yt_3-t_1^2).\] Since $R$ is the algebra (1) in Table \ref{koszultable}, it is Koszul by Lemma \ref{allkoszul}.

\item If $S$ has exactly four variables, then $V^{\perp} = \vspan \{x^2, y^2, 2xt_2 + yL_2 + t_1^2\}$, hence \[I = (t_2^2, xt_1, t_1t_2, t_1^2 - xt_2, xy-*xt_2, yt_1-*xt_2, yt_2 -*xt_2).\]
Let $\omega = \begin{pmatrix} x & y & t_1 & t_2 \\ 2 & 3 & 1 & 0 \end{pmatrix}$ be a weight on $S$; then \[in_{\omega}(I) = (t_2^2, xt_1, t_1t_2, t_1^2-xt_2, xy, yt_1, yt_2)\] and, since $S/in_{\omega}(I)$ is the Koszul algebra (2) in Table \ref{koszultable}, $S/I$ is Koszul as well.
\end{itemize}

\subsubsection{The quadric $Q$ equals zero}
Since we want to exclude trivial fiber extensions, at least one of $L_1$ and $L_2$ must contain $t_1$: without loss of generality, let $L_1$ contain $t_1$. After a suitable change of coordinates we get $L_1 = t_1$. For analogous reasons we can repeat the procedure in order to obtain $L_2 = t_2$, hence $V^{\perp} = \vspan \{x^2, y^2, xt_1+yt_2\}$. The ring $S$ must contain exactly four variables, otherwise $R$ would be a trivial fiber extension: therefore, \[I = (t_1^2, t_2^2, t_1t_2, yt_1, xt_2, xy, xt_1-yt_2).\]
This gives us the Koszul algebra (3) in Table \ref{koszultable}.

This ends the discussion of the case when $\mathcal{V}(I)$ consists of two points. \qed

\subsection{The variety $\mathcal{V}(I)$ consists of a point and $R$ has a null-square linear form of rank 2} \label{onepointrank2}
\begin{table}[h]
\centering
\caption{Integral weights used in Section \ref{onepointrank2}}
\label{weighttable}
    \begin{tabular}{|c|c|c|c|c|c|c|}
    \hline
     & $x$ & $y_1$ & $y_2$ & $y_3$ & $y_i$ (for $4 \leq i \leq n-2$) & $t$ \\ \hline
    $\omega^{(1)}$ & 4 & 3    & 2    & 4    & 4                                              & 0 \\
    $\omega^{(2)}$ & 9 & 5    & 3    & 8    & 8                                              & 0 \\
    $\omega^{(3)}$ & 7 & 4    & 3    & 3    & 5                                              & 0 \\
    $\omega^{(4)}$ & 4 & 2    & 1    & 2    & 2                                              & 0 \\
    $\omega^{(5)}$ & 4 & 2    & 1    & 1    & 2                                              & 0 \\
    $\omega^{(6)}$ & 5 & 3    & 0    & 2    & 2                                              & 1 \\
    $\omega^{(7)}$ & 3 & 2    & 0    & 1    & 1                                              & 1 \\ \hline
    \end{tabular}
\end{table}
Let $S = \mathbbm{k}[x, y_1, \ldots, y_{n-2}, t]$. After a change of coordinates we can assume $\mathcal{V}(I) = \{[1, 0, \ldots, 0]\}$ and hence, since $I \subseteq (y_1, \ldots, y_{n-2}, t)S_1$ and $I$ is generated by $n-3$ quadrics, the Hilbert series of $S/I$ is coefficientwise greater than or equal to $\mathbf{A}_n$.

Without loss of generality, let $t$ be a rank 2 null-square linear form. Then at least one of $\overline{y_1t}, \ldots, \overline{y_{n-2}t} \in R_2$ is nonzero: from now on, assume $\overline{y_1t}$ is nonzero.\\

Suppose there exists $i \in \{2, \ldots, n-2\}$ such that $\dim_{\mathbbm{k}}\vspan\{\overline{y_1t}, \overline{y_it}\} = 2$ and let $i = 2$ without loss of generality. Then $I$ is generated by $t^2$ and $m - *y_1t - *y_2t$, where $m$ varies over the monomials of $S_2$ different from $x^2$, $y_1t$, $y_2t$ and $t^2$.  Let $\tau$ be a DegRevLex term order on $S$ such that $t <_{\tau} y_1 <_{\tau} y_2 <_{\tau} v$ for all $v \in \{x, y_3, \ldots, y_{n-2}\}$; one checks that \[in_{\tau}(I) = (y_3, \ldots, y_{n-2})S_1 + (t^2, xt, xy_1, xy_2, y_1y_2, y_1^2, y_2^2)\] and hence $R$ is G-quadratic.\\

From now on, we shall assume that \begin{center}\fbox{$\dim_{\mathbbm{k}}\vspan\{\overline{y_1t}, \overline{y_it}\} = 1$ for all $i \in \{2, \ldots, n-2\}$.}\end{center} Then $\dim_{\mathbbm{k}}\vspan\{\overline{y_1t}, \overline{xt}\} = 2$ and the generators of $I$ are \[\begin{split}&t^2,\\&y_it-a_iy_1t\textrm{ for all }i \in \{2, \ldots, n-2\},\\&xy_j-b_jxt-*y_1t\textrm{ for all }j \in \{1, \ldots, n-2\},\\&y_ky_{\ell}-*xt-*y_1t\textrm{ for all }k, \ell \in \{1, \ldots, n-2\}\textrm{ with }k \leq \ell.\end{split}\]
By applying the change of coordinates \[\left\{ \begin{array}{lll} x & \mapsto & x \\ y_1 & \mapsto & y_1+b_1t \\ y_i & \mapsto & y_i+a_iy_1+b_it\textrm{ for all }i \in \{2, \ldots, n-2\} \\ t & \mapsto & t \end{array} \right.\] one gets the ideal $I'$ generated by

\begin{equation}\tag{$\clubsuit$} \label{Iprimegens}
\begin{split}&t^2,\\&y_it\textrm{ for all }i \in \{2, \ldots, n-2\},\\&xy_j-d_{0j}y_1t\textrm{ for all }j \in \{1, \ldots, n-2\},\\&y_ky_{\ell}-c_{k{\ell}}xt-d_{k{\ell}}y_1t\textrm{ for all }k, \ell \in \{1, \ldots, n-2\}\textrm{ with }k \leq \ell.\end{split}\end{equation}

Note that $R$ is Koszul (resp. G-quadratic) if and only if $S/I'$ has the same property.

If $xt$ does not appear in \eqref{Iprimegens}, then $I' \subseteq (t^2) + (y_1, \ldots, y_{n-2})S_1$ and, since we also know that $I'$ is generated by $n-3$ quadrics, the Hilbert series of $S/I'$ is coefficientwise greater than or equal to $\mathbf{B}_n$. In this case, let $\tau$ be a term order on $S$ such that $t < y_1 < v$ for all $v \in \{x, y_2, \ldots, y_{n-2}\}$. Then one has that \[in_{\tau}(I') = (y_2, \ldots, y_{n-2})S_1 + (t^2, xy_1, y_1^2)\] and hence $R$ is G-quadratic.

From now on assume instead that $xt$ appears in \eqref{Iprimegens}, i.e. \begin{center}\fbox{$c_{k{\ell}}$ is nonzero for some choice of $k$ and $\ell$.}\end{center}

\begin{itemize}
\item Suppose there exists $i \in \{2, \ldots, n-2\}$ such that $c_{ii} \neq 0$. Let $i = 2$ and $c_{22} =1$ without loss of generality (one can simply rescale $y_2$ to obtain the latter). Then, using the weight $\omega^{(1)}$ (see Table \ref{weighttable}), one has that \[in_{\omega^{(1)}}(I') = (y_3, \ldots, y_{n-2})S_1 + (t^2, y_2t, xy_1, xy_2, y_1^2, y_1y_2, y_2^2-xt)\]
and, since $S/in_{\omega^{(1)}}(I')$ is the Koszul algebra (4) in Table \ref{koszultable}, $S/I'$ is Koszul as well.

\item Suppose $c_{ii} = 0$ for all $i \in \{2, \ldots, n-2\}$ and suppose there exists $j \in \{2, \ldots, n-2\}$ such that $c_{1j} \neq 0$. Without loss of generality, let $c_{12} = 1$. By completing the weight $\omega^{(2)}$ to a term order $\tau$, one notes that
\[in_{\tau}(I') = (y_3, \ldots, y_{n-2})S_1 + (t^2, y_2t, xy_1, xy_2, y_1^2, xt, y_2^2)\]
and hence $R$ is G-quadratic.

\item Suppose $c_{ii} = c_{1i} = 0$ for all $i \in \{2, \ldots, n-2\}$ and suppose there exist $k, \ell \in \{2, \ldots, n-2\}$ such that $k < \ell$ and $c_{k{\ell}} \neq 0$. Without loss of generality, let $c_{23} = 1$. By completing the weight $\omega^{(3)}$ to a term order $\tau$, one gets that
\[in_{\tau}(I') = (y_4, \ldots, y_{n-2})S_1 + (t^2, y_2t, y_3t, xy_1, xy_2, xy_3, y_1^2, y_1y_2, y_1y_3, y_2^2, xt, y_3^2)\]
and hence $R$ is G-quadratic.
\end{itemize}

From now on we will assume (without loss of generality) that \begin{center}\fbox{$c_{11} = 1$ and $c_{k{\ell}} = 0$ for all $k, \ell$ such that $k \leq \ell$, $(k, \ell) \neq (1, 1)$.}\end{center} Note that, in order to avoid trivial fiber extensions, for all $i \in \{2, \ldots n-2\}$ at least one of $d_{0i}, d_{1i}, \ldots, d_{{n-2,} i}$ must be nonzero.
\begin{itemize}

\item Suppose there exists $i \in \{2, \ldots, n-2\}$ such that $d_{ii} \neq 0$. Let $i = 2$ and $d_{22} =1$ without loss of generality. One has that
\[in_{\omega^{(4)}}(I') = (y_3, \ldots, y_{n-2})S_1 + (t^2, y_2t, xy_1, xy_2, y_1^2-xt, y_1y_2, y_2^2-y_1t)\]
and, since $S/in_{\omega^{(4)}}(I')$ is Koszul (being a trivial fiber extension of the Koszul algebra (5) in Table \ref{koszultable}), also $S/I'$ is.

\item Suppose that $d_{ii} = 0$ for all $i \in \{2, \ldots, n-2\}$ and there exist $k, \ell \in \{2, \ldots, n-2\}$ such that $k < \ell$, $d_{k{\ell}} \neq 0$. Let $d_{23} = 1$ without loss of generality. Then
\[\begin{split}in_{\omega^{(5)}}(I') = &(y_4, \ldots, y_{n-2})S_1 +\\&(t^2, y_2t, y_3t, xy_1, xy_2, xy_3, y_1^2-xt, y_1y_2, y_1y_3, y_2^2, y_2y_3-y_1t, y_3^2).\end{split}\]

and, since $S/in_{\omega^{(5)}}(I')$ is Koszul (being a trivial fiber extension of the Koszul algebra (6)), $S/I'$ is too.

\item Suppose $d_{k{\ell}} = 0$ for all $k, \ell \in \{2, \ldots, n-2\}$ such that $k \leq \ell$ and suppose there exists $i \in \{2, \ldots, n-2\}$ such that $d_{1i} \neq 0$. Let $d_{12} = 1$ without loss of generality. Then
\[in_{\omega^{(6)}}(I') = (y_3, \ldots, y_{n-2})S_1 + (t^2, y_2t, xy_1, xy_2, y_1^2-xt, y_1t, y_2^2)\]
and hence, since $S/in_{\omega^{(6)}}(I')$ is a trivial fiber extension of the Koszul algebra (7), $S/I'$ is also Koszul.

\item Suppose $d_{k{\ell}} = 0$ for all $k \in \{1, \ldots, n-2\}$, $\ell \in \{2, \ldots, n-2\}$ such that $k \leq \ell$. Then $d_{0i} \neq 0$ for all $i \in \{2, \ldots, n-2\}$. Without loss of generality, let $d_{02} = 1$. One has that
\[in_{\omega^{(7)}}(I') = (y_3, \ldots, y_{n-2})S_1 + (t^2, y_2t, xy_1, xy_2-y_1t, y_1^2-xt, y_1y_2, y_2^2).\]
and hence, since $S/in_{\omega^{(7)}}(I')$ is a trivial fiber extension of the Koszul algebra (8), $S/I'$ is Koszul as well.

\end{itemize}

This ends the discussion of the case when $\mathcal{V}(I)$ consists of a single point and $R$ contains a null-square linear form of rank 2. \qed

\subsection{The variety $\mathcal{V}(I)$ consists of a point and all null-square linear forms in $R$ have rank 1} \label{onepointrank1}

\begin{table}[h]
\centering
\caption{Integral weights used in Section \ref{onepointrank1}}
\label{weighttable2}
    \begin{tabular}{|c|c|c|c|c|}
    \hline
     & $x$ & $y$ & $t_1$ & $t_2$ \\ \hline
    $\omega^{(8)}$ & 3 & 2    & 2    & 1                                               \\
    $\omega^{(9)}$ & 8 & 6    & 4    & 1                                           \\
    $\omega^{(10)}$ & 5 & 4    & 3    & 1                                          \\ \hline
    \end{tabular}
\end{table}

Let $S = \mathbbm{k}[x, y, t_1, \ldots, t_{n-2}]$. We have proved in Section \ref{onepointrank2} that, when $\mathcal{V}(I)$ is a single point, the presence of a null-square linear form of rank 2 guarantees Koszulness. The cases left to analyse are then the ones where all null-square linear forms have rank 1. Recall that, by Lemma \ref{linear_forms_and_points}, null-square linear forms correspond to points of $\mathcal{V}(V^{\perp})$. We know that $x^2 \in V^{\perp}$, hence we can complete $x^2$ to a $\mathbbm{k}$-basis of $V^{\perp}$ \[\{G_1 := x^2, G_2, G_3\}.\] Solving the system $G_1 = G_2 = G_3 = 0$ is equivalent to solving the system $G_2|_{x=0} = G_3|_{x=0} = 0$. Note that in this process of reduction we lose both a variable and an equation, hence coming back to the case of codimension 2 (in $n-1$ variables) studied by Conca in \cite{Conca2000}. In particular, going through the proof of \cite[Proposition 6]{Conca2000}, we get that, in this restricted setting, a necessary condition for all null-square linear forms to have rank 1 is that one of $G_2|_{x=0}$ and $G_3|_{x=0}$ is a square of a linear form (see Lemma \ref{concalemma} at the end of this section). Without loss of generality, let us assume that $G_2|_{x=0} = y^2$. This means that \[G_2 = y^2+xL,\] where $L$ is a nonzero linear form not containing $x$ nor $y$ (if it contained $y$, we could perform a suitable change of coordinates to eliminate it). For the rest of this subsection, we shall use the following as a $\mathbbm{k}$-basis of $V^{\perp}$: \[\begin{split}G_1 &= x^2 \\ G_2 &= y^2+xL(t_1, \ldots, t_{n-2}) \\ G_3 &= 2axy + xL_2(t_1, \ldots, t_{n-2}) + yL_3(t_1, \ldots, t_{n-2}) + Q(t_1, \ldots, t_{n-2}),\end{split}\] where $L$, $L_2$ and $L_3$ are linear forms, $Q$ is a quadratic form and $a \in \mathbbm{k}$. We are going to analyse several subcases according to the rank of the quadric $Q$ (call this number $r$).

\subsubsection{The rank of Q is greater than or equal to 2}
In this case, after a suitable change of coordinates, we may assume that $Q = t_1t_2 + t_3^2 + \ldots + t_r^2$. Now, if $L$ contains $t_i$ where $i \in \{1, 2\}$, then $t_i$ is a null-square linear form of rank 2. Let us see this computation in detail, since we will hint at analogous ones several times in the next lines. Note first that $\overline{t_i^2}$ is zero by applying Lemma \ref{linear_forms_and_points}. As regards the rank of $t_i$, note that we are in the hypotheses of Remark \ref{apolarityrem} with $m_1 = x^2$, $m_2 = y^2$ and $m_3 = t_1t_2$. Hence, to compute the dimension of $\overline{t_i}R_1$ we just have to compute the rank of the matrix that encodes the multiplication by $\overline{t_i}$ with respect to the basis $\{\overline{x^2}, \overline{y^2}, \overline{t_1t_2}\}$, i.e. (assuming without loss of generality that $i=1$)

\[
  \bordermatrix
  {
    ~ & \overline{xt_1} & \overline{yt_1} & \overline{t_1^2} & \overline{t_1t_2} & \overline{t_1t_3} & \ldots & \overline{t_1t_{n-2}} \cr
    \overline{x^2} & 0 & 0 & 0 & 0 & 0 & \ldots & 0 \cr
    \overline{y^2} & \neq 0 & 0 & 0 & 0 & 0 & \ldots & 0 \cr
    \overline{t_1t_2} & * & * & 0 & 1 & 0 & \ldots & 0 \cr
   }
\]
The rank of this matrix is clearly 2 and thus we are done.

If $L$ does not contain neither $t_1$ nor $t_2$, then it must contain $t_j$, where $j \in \{3, \ldots, n-2\}$: if $j \leq r$, then $t_1 - t_2 + t_j$ is a null-square linear form of rank 2, whereas if $j > r$ then $t_1 + t_j$ is such a form.

\subsubsection{The rank of Q is 1}
We can use our remaining degrees of freedom to assume that $Q = t_1^2$.
If the coefficient of $t_i$ in $L$ is nonzero for some $i \neq 1$ then, possibly after a change of coordinates, we can further assume that $L = 2t_2$; otherwise, by rescaling $y$ in a suitable way, we can assume that $L = 2t_1$. In both cases, acting on $t_1$ we can also assume that the coefficient of $xt_1$ in $G_3$ is zero, i.e. $L_2$ does not contain $t_1$.

\begin{itemize}
\item $L = 2t_2$. If $L_3$ contains $t_i$ with $i \neq 1$, then we get a null-square linear form of rank 2. More precisely, if $L_3$ contains $t_2$, then $t_2$ is such a form; otherwise, $t_2 + t_j$ is such a form for any $t_j$ (with $j \in \{3, \ldots, n-2\}$) appearing in $L_3$. Let us then assume that $L_3 = 2bt_1$. By sending $t_1$ into $t_1-by$ we may assume that $L_3$ is, in fact, zero. If we have more then five variables, then the algebra we are studying is a trivial fiber extension. 

If we have exactly five variables, then $L_2$ must contain $t_3$ and after a suitable change of coordinates we get $a = 0$ and $L_2 = 2t_3$. Then \begin{equation*}I = (xy, xt_1, xt_2-y^2, xt_3-t_1^2, yt_1, yt_2, yt_3, t_1t_2, t_1t_3, t_2^2, t_2t_3, t_3^2)\end{equation*} and this gives us the Koszul algebra (9) in Table \ref{koszultable}.

If we have exactly four variables, then (recalling that $L_2$ does not contain $t_1$) \[G_3 = 2axy + 2cxt_2 + t_1^2\] and \[I = (xy - at_1^2, xt_1, xt_2 - y^2 - ct_1^2, yt_1, yt_2, t_1t_2, t_2^2).\]
Note that at least one of $a$ and $c$ must be nonzero, otherwise $\mathcal{V}(I)$ would contain two points against our assumptions.

If $c \neq 0$, then by rescaling $t_1$ we may assume $c = 1$. Using the weight $\omega^{(8)}$ (see Table \ref{weighttable2}), one gets that 
\[in_{\omega^{(8)}}(I) = (xy, xt_1, xt_2-y^2-t_1^2, yt_1, yt_2, t_1t_2, t_2^2)\]
and, since $S/in_{\omega^{(8)}}(I)$ is the Koszul algebra (10) in Table \ref{koszultable}, $S/I$ is Koszul too.

If $c = 0$, then we may assume after rescaling $t_1$ that $a = 1$. Then $S/I$ is the Koszul algebra (11) in Table \ref{koszultable}.

\item $L = 2t_1$. We cannot have more than five variables, otherwise we would get a trivial fiber extension. If the variables are exactly five and our algebra is not a trivial fiber extension, then after a suitable change of coordinates we get $a = 0$, $L_2 = 2t_2$, $L_3 = 2t_3$. Then \[I = (xy, xt_1-y^2, xt_2-t_1^2, xt_3, yt_1, yt_2, yt_3-t_1^2, t_1t_2, t_1t_3, t_2^2, t_2t_3, t_3^2)\] and this gives us the Koszul algebra (12) in Table \ref{koszultable}. What if we have exactly four variables? Then the variable $t_2$ must appear in at least one of $L_2$ and $L_3$.

If $t_2$ appears in $L_3$, then $G_3 = 2axy + 2dxt_2 + 2eyt_1 + 2yt_2 + t_1^2$ (rescaling $t_2$ so that the coefficient of $yt_2$ is 2). Mapping simultaneously $t_1$ into $t_1 + dex$ and $t_2$ into $t_2 - et_1 + (-de^2 - a)x$ we find the new basis \[\{G'_1 = G_1,~G'_2 = G_2,~G'_3 = 2dxt_2 + 2yt_2 + t_1^2\}\] and hence \[I' = (xy, xt_1-y^2, xt_2-dt_1^2, yt_1, yt_2-t_1^2, t_1t_2, t_2^2).\]
One has that
\[in_{\omega^{(9)}}(I') = (xy, xt_1-y^2, xt_2, yt_1, t_1^2, t_1t_2, t_2^2).\]
We find out that, up to variable renaming, $S/in_{\omega^{(9)}}(I')$ is the Koszul algebra (7) in Table \ref{koszultable}. Hence $S/I$ is Koszul as well.

If $t_2$ does not appear in $L_3$, then it appears in $L_2$ and rescaling it suitably we get that $G_3 = 2axy + 2xt_2 + *yt_1 + t_1^2$, hence \[I = (xy-at_1^2, xt_1-y^2, xt_2-t_1^2, yt_1-*t_1^2, yt_2, t_1t_2, t_2^2).\] One has that \[in_{\omega^{(10)}}(I) = (xy, xt_1-y^2, xt_2-t_1^2, yt_1, yt_2, t_1t_2, t_2^2).\] Since $S/in_{\omega^{(10)}}(I)$ is the Koszul algebra (11) with the roles of $y$ and $t_1$ switched, $S/I$ is Koszul.
\end{itemize}

\subsubsection{The quadric Q equals zero}
In this case, after a change of coordinates, we may assume $L = 2t_1$ without loss of generality. If $L_3$ is nonzero, then we get a null-square linear form of rank 2. More precisely, if $t_1$ appears in $L_3$, then $t_1$ is such a form; otherwise, take $t_1 + t_j$ where $t_j$ appears in $L_3$.

Hence $L_3=0$ and we must have exactly four variables, otherwise our algebra would be a trivial fiber extension. After a change of coordinates we can assume that $a=0$ and $L_2 = t_2$. This means that \[I = (xy, y^2-xt_1, yt_1, yt_2, t_1^2, t_1t_2, t_2^2)\] and this gives us the Koszul algebra (13) in Table \ref{koszultable}.

This ends the proof of Theorem \ref{mainthm}. \qed

\begin{lemma}[essentially \cite{Conca2000}] \label{concalemma}
Let $\mathbbm{k}$ be an algebraically closed field of characteristic different from 2 and let $R$ be a quadratic standard graded $\mathbbm{k}$-algebra which is not a trivial fiber extension. If $\dim_{\mathbbm{k}}R_1 \geq 3$, $\dim_{\mathbbm{k}}R_2 = 2$ and there is no null-square linear form $\ell \in R_1$ such that $\ell R_1 = R_2$, then $V^{\perp}$ contains the square of a linear form.
\end{lemma}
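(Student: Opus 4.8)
The plan is to translate everything into the language of the apolarity pairing and reduce the statement to an elementary fact about pencils of quadrics. Write $V^{\perp} = \vspan\{G_1, G_2\}$, a two-dimensional space (a \emph{pencil}) of quadratic forms in the dual variables $u_1, \ldots, u_n$, and let $\mathcal{V}(V^{\perp})$ be its base locus. By Lemma \ref{linear_forms_and_points} the null-square linear forms of $R$ correspond bijectively to the points of $\mathcal{V}(V^{\perp})$, and symmetrically $V^{\perp}$ contains the square of a nonzero linear form if and only if $\mathcal{V}(V) \neq \emptyset$, i.e. if and only if the pencil contains a quadric of rank $1$. The first thing I would record is a dictionary for the rank of a null-square form: if $p \in \mathcal{V}(V^{\perp})$ corresponds to the null-square form $\ell$, then, reading off the multiplication table as in Remark \ref{apolarityrem} (the off-diagonal entries of the first row of the Gram matrix of $G_i$ are exactly the structure constants of multiplication by $\ell$), one gets that the rank of $\ell$ equals $\dim_{\mathbbm{k}} \vspan\{\nabla G_1(p), \nabla G_2(p)\}$. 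Under this dictionary the hypotheses become purely geometric: ``$R$ is not a trivial fiber extension'' means that $G_1$ and $G_2$ are never simultaneously singular on $\mathcal{V}(V^{\perp})$, and ``no null-square form has rank $2$'' means that $\nabla G_1(p)$ and $\nabla G_2(p)$ are linearly dependent at every base point $p$.

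Since $\dim_{\mathbbm{k}} R_1 = n \geq 3$, the quadrics $G_1, G_2$ live in $\mathbb{P}^{n-1}$ with $n-1 \geq 2$, so their base locus $\mathcal{V}(V^{\perp})$ is nonempty; this is the only place the assumption $\dim_{\mathbbm{k}} R_1 \geq 3$ is used. I would then fix a base point $p$ and normalize. Since the two gradients at $p$ are dependent but (by the no-trivial-fiber-extension hypothesis) not both zero, I can change the basis of $V^{\perp}$ so that $\nabla G_2(p) = 0 \neq \nabla G_1(p)$, and then change coordinates fixing $p = [1:0:\cdots:0]$ to arrange \[G_1 = u_1 u_2 + A(u_2, \ldots, u_n), \qquad G_2 = B(u_2, \ldots, u_n),\] where $A, B$ are quadratic forms not involving $u_1$. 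The key observation is now that any rank-$1$ member of the pencil must be proportional to $G_2$: a combination $sG_1 + tG_2$ with $s \neq 0$ has a nonzero $u_1 u_2$-coefficient but vanishing $u_1^2$-coefficient, which is incompatible with being a square of a linear form. Hence the whole statement reduces to showing that $B = G_2$ itself has rank $1$.

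To finish I would argue by contradiction: assume $r := \rk(B) \geq 2$ (note $r \geq 1$, since $G_2 \neq 0$). The quadric $\{B = 0\} \subseteq \mathbb{P}^{n-2}$ is then reduced, so its smooth locus is dense; a short case distinction (either $u_2 \nmid B$, whence $\{B=0\}$ is not contained in the hyperplane $\{u_2 = 0\}$, or $B = u_2 \cdot \ell$ with $\ell$ independent of $u_2$, whence $\{\ell = 0\}$ provides smooth points off $\{u_2 = 0\}$) produces a smooth point $w$ of $\{B=0\}$ with $u_2(w) \neq 0$. Solving the single linear equation $u_1 u_2 + A = 0$ for the $u_1$-coordinate lifts $w$ to a genuine base point $q$ of the pencil. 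At $q$ the $u_1$-component of $\nabla G_1(q)$ equals $u_2(w) \neq 0$, while $\nabla G_2(q) = (0, \nabla B(w)) \neq 0$ has vanishing $u_1$-component; these two gradients are therefore linearly independent, producing a null-square form of rank $2$ and contradicting the hypothesis. Thus $r = 1$ and $V^{\perp}$ contains the square of a linear form.

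The main obstacle is the normalization step together with the accompanying dictionary between the algebra of $R$ and the geometry of the pencil $V^{\perp}$: once the rank of a null-square form is correctly identified with the dimension of the span of the two gradients at the corresponding base point, the rest is a transparent pencil-of-quadrics computation. I would also note that this argument is essentially a repackaging of the relevant part of \cite[Proposition 6]{Conca2000}; alternatively, as indicated in the discussion preceding the statement, one may simply restrict to the hyperplane $\{x = 0\}$ and quote Conca's classification of codimension-$2$ quadratic algebras directly.
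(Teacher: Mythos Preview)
Your argument is correct and self-contained, whereas the paper's own proof is essentially a citation: it walks through the three cases in the proof of \cite[Proposition~6]{Conca2000} and reads off the desired conclusion from each (in Case~1 a change of coordinates forces $y^2\in V^{\perp}$, Case~2 is vacuous since a rank-$2$ null-square form always exists there, and Case~3 reduces to a single three-variable ideal where $z^2\in V^{\perp}$). Your route is genuinely different: the dictionary identifying the rank of a null-square form $\ell\leftrightarrow p$ with $\dim_{\mathbbm{k}}\vspan\{\nabla G_1(p),\nabla G_2(p)\}$ converts the statement into a clean geometric fact about a pencil of quadrics, which you then settle by the normalization $G_1=u_1u_2+A$, $G_2=B$ and a smooth-point lift. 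What your approach buys is independence from \cite{Conca2000} and a transparent conceptual reason for the result; what the paper's approach buys is brevity, since Conca's case analysis is simply quoted. One small imprecision worth flagging: the reference to Remark~\ref{apolarityrem} for the gradient dictionary is slightly off, since that remark treats a special monomial situation; the actual justification is the direct computation $\langle G_i,\ell\cdot x_j\rangle = 2(A^{(i)}\mathbf{c})_j = (\nabla G_i(\mathbf{c}))_j$ via the Gram matrix, which is what you are really using and which goes through without the hypotheses of that remark.
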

\begin{proof}
We shall refer to the division into three cases that appears in the proof of \cite[Proposition 6]{Conca2000}.

In Case 1, Conca proves that a necessary condition in order not to have any $\ell$ such that $\ell R_1 = R_2$ is that the defining ideal $I$ is contained in the ideal $(x, z-fy, t_i-b_iy \mid i \in \{3, \ldots, n-1\})$. Sending $z$ into $z+fy$ and each $t_i$ into $t_i+b_iy$ one then gets that $y^2$ does not appear in any generator of the new ideal $I'$. Hence $y^2$ lies in the orthogonal of the associated vector space $V'$ and, since going back to $V^{\perp}$ just involves a change of coordinates, we are done. In Case 2 there always exists a null-square linear form $\ell$ such that $\ell R_1 = R_2$. Finally, Case 3 is the same as Case 1 but for a single three-variable case where $I = (x^2, xy, yz, y^2-xz)$ and hence $V^{\perp}$ contains $z^2$. 
\end{proof}

\section{Some Koszul algebras which are not G-quadratic}
For the rest of the section, let $S = \mathbbm{k}[x, y, z, t]$.

\begin{prop} \label{Gquadneccond}
Let $I$ be a quadratic ideal of $S$ such that the Hilbert series of $S/I$ is $\mathbf{A}_4$. If $S/I$ is G-quadratic, then there exists a change of coordinates $g \in \GL_4(\mathbbm{k})$ such that $g(I) \in \mathfrak{F}$, where $\mathfrak{F}$ is the family consisting of the quadratic ideals of the form \[(y^2, zt, xy, xz-*yt, t^2-*yt-*yz, z^2-*yt-*yz, xt-*yt-*yz).\]
\end{prop}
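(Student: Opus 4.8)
The plan is to pass to a monomial initial ideal, classify it combinatorially, and then lift the classification back to $I$; the content splits into a clean reduction, a combinatorial core, and a normalisation that is where the real effort goes.

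\emph{Step 1 (reduction to a monomial ideal).} Since $S/I$ is G-quadratic, after a change of coordinates $g_0$ there is a term order $\tau$ for which $g_0(I)$ carries a quadratic Gr\"obner basis. Replacing $I$ by $g_0(I)$ (legitimate, as we only seek the existence of one $g$), the initial ideal $J := in_{\tau}(I)$ is a monomial ideal generated by quadrics with $\HS(S/J) = \HS(S/I) = \mathbf{A}_4$. In particular $\dim_{\mathbbm{k}} J_2 = 7$ and $\dim_{\mathbbm{k}}(S/J)_d = 1$ for every $d \geq 3$, so $S/J$ has a single standard monomial in each high degree.

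\emph{Step 2 (classification of $J$).} Because the standard monomials are closed under division and there is exactly one in each degree $\geq 3$, that monomial is a pure power of a single variable, say $x$ after relabelling; hence $x^d$ is standard for all $d$, and in particular $x^2$ is standard. If any $x^2v$ $(v \in \{y,z,t\})$ or any pure square $y^2, z^2, t^2$ were standard, a second standard monomial would appear in degree $3$; thus $xy, xz, xt, y^2, z^2, t^2 \in J$, and exactly one of $yz, yt, zt$ completes the seven generators. Up to a permutation of $\{y,z,t\}$ (a change of coordinates) we may take $J = (y^2, z^2, t^2, xy, xz, xt, zt)$, whose standard monomials are precisely $x^2, yz, yt$ — exactly the leading terms and the degree-$2$ basis that can be read off from $\mathfrak{F}$.

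\emph{Step 3 (lifting).} The reduced Gr\"obner basis of $I$ now consists of seven elements $m_i - t_i$, where $m_i$ runs over the generators of $J$ and each tail $t_i$ lies in $\vspan\{x^2, yz, yt\}$. Two inputs trim the tails to the shape of $\mathfrak{F}$. First, the numerator of $\mathbf{A}_4$ equals $1$ at $z=1$, so $S/I$ has Krull dimension $1$ and degree $1$: $\mathcal{V}(I)$ is a single reduced point, which after a further change of coordinates we place at $[1,0,0,0]$, forcing $x^2 \in V^{\perp}$ and so deleting the $x^2$-coefficient of every $t_i$. Second, the inequalities that make $J$ the initial ideal (each $m_i$ must exceed the monomials in its tail) kill the tail of $y^2$ and restrict the others; the remaining coefficients are removed by rescalings and by triangular coordinate changes that preserve $J$, such as $x \mapsto x + \ast z + \ast t$ (which clears the tail of $xy$) and $t \mapsto t - \ast y$ (which clears the $yz$-term of $zt$), each fixing $[1,0,0,0]$ and fixing $J$ after re-reduction. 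The outcome is an ideal of the family $\mathfrak{F}$.

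\emph{Main obstacle.} The delicate point is that the two normalisations in Step 3 pull toward different coordinate systems: sending the point to $[1,0,0,0]$ need not respect the monomial initial ideal $J$, while the Gr\"obner witness need not fix the point. Reconciling them — either by re-deriving the quadratic Gr\"obner basis inside the stabiliser of the point, or by fixing the point first and parametrising $V^{\perp} = \vspan\{x^2, G_2, G_3\}$ directly and then imposing G-quadraticness — together with the bookkeeping needed to show that \emph{exactly} the support pattern of $\mathfrak{F}$ survives (in particular that $zt$, and not only $y^2$ and $xy$, becomes tail-free, and that $xz$ retains only its $yt$-term), is where the work concentrates. A helpful consistency check underlying everything is to exhibit a single term order satisfying all the required inequalities, e.g.\ the weight $\omega = (1,0,1,1)$ on $(x,y,z,t)$ refined so that $x^2$ is the largest monomial of $\omega$-degree $2$: this simultaneously keeps $x^2$ out of every tail and realises the leading terms displayed in $\mathfrak{F}$.
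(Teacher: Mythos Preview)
Your Steps 1 and 2 are correct and essentially reproduce the paper's Lemma~\ref{uniquemonomial}: the unique quadratic monomial ideal with Hilbert series $\mathbf{A}_4$ is, up to relabelling, $(y^2,z^2,t^2,xy,xz,xt,zt)$, with standard monomials $x^2,yz,yt$ in degree~$2$.

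Step 3, however, has a genuine gap, and it is precisely the obstacle you flag but do not resolve. Two concrete problems:

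\begin{itemize}
\item Your claim that the term-order inequalities ``kill the tail of $y^2$'' is false in general. For any ordering with $y$ maximal (e.g.\ $y>x>z>t$), all three standard monomials $x^2$, $yz$, $yt$ are smaller than $y^2$, so the reduced Gr\"obner basis element with leading term $y^2$ can carry an arbitrary tail $y^2 - \ast x^2 - \ast yz - \ast yt$. Similar remarks apply to $xy$ and $zt$: which tails are \emph{a priori} zero depends entirely on the ordering, and there is no single ordering-free argument.
\item Moving the point of $\mathcal{V}(I)$ to $[1,0,0,0]$ is a coordinate change that in general destroys the initial ideal $J$, so the ``tails in $\vspan\{x^2,yz,yt\}$'' description is no longer available afterwards. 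Your two suggested workarounds (re-derive a quadratic Gr\"obner basis inside the stabiliser of the point, or parametrise $V^{\perp}$ first and impose G-quadraticness) are not carried out, and neither is obviously easier than the direct approach.
\end{itemize}

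The paper does not try to decouple the two normalisations. Instead it fixes the initial ideal $J$ and runs a case analysis over the twelve variable orderings (modulo the $z\leftrightarrow t$ symmetry). In each case the permitted tails are determined by the ordering; the paper then applies explicit triangular coordinate changes---chosen so that they either preserve $J$ or reduce to a previously handled ordering---to reach the shape of $\mathfrak{F}$. The ``$\mathcal{V}(I)$ is a single point'' constraint is invoked only in the few cases where $x$ is small enough for $x^2$ to appear in tails, and only \emph{after} the triangular changes, to force the surviving $x^2$-coefficients to vanish. Your consistency check with the weight $(1,0,1,1)$ confirms only that members of $\mathfrak{F}$ are G-quadratic, not the converse direction you need.
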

\begin{proof}
See Appendix \ref{appendixGquad}.
\end{proof}

Let us examine some consequences of Proposition \ref{Gquadneccond}.

\begin{rem} \label{atleasttwoforms}
If $S/I$ has Hilbert series $\mathbf{A}_4$ and is G-quadratic, then it has at least two distinct null-square linear forms. One of them is trivially $y$. To find another one, recall that, by Lemma \ref{linear_forms_and_points}, the null-square linear forms of $S/I$ correspond to the points of the projective variety $\mathcal{V}(V^{\perp})$. In our case, 
\[I = (y^2, zt, xy, xz-*yt, t^2+2ayz+2byt, z^2+2cyz+2dyt, xt-*yt-*yz)\]
and 
\[V^{\perp} = \vspan\{x^2,~yz-at^2-cz^2-*xt,~yt-bt^2-dz^2-*xt-*xz\}.\]
If $a=b=0$ one has that, since $t^2$ does not appear in $V^{\perp}$, $t$ is a null-square linear form of $S/I$; otherwise, $(a\lambda^2 + c)y + z + \lambda t$ (where $\lambda$ is a root of $aX^3-bX^2+cX-d$) is a null-square linear form.
\end{rem}

The following corollary settles in the negative the following question asked by Conca in \cite[Section 4]{Conca2009}: if $R$ is a standard graded quadratic $\mathbbm{k}$-algebra which is not a trivial fiber extension and is such that $\dim_{\mathbbm{k}}R_1 > 3 = \dim_{\mathbbm{k}}R_2$, is it true that $R$ is G-quadratic?

\begin{cor} \label{notGquad}
The algebras (5), (10), and (11) in Table \ref{koszultable} are Koszul but not G-quadratic.
\end{cor}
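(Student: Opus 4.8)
The Koszulness half is immediate: the algebras (5), (10) and (11) all appear in Table~\ref{koszultable}, hence admit a Koszul filtration by Lemma~\ref{allkoszul} and are in particular Koszul. The content of the corollary is therefore that none of the three is G-quadratic, and the plan is to deduce this from Remark~\ref{atleasttwoforms}. Each of these algebras has Hilbert series $\mathbf{A}_4$, as recorded in Table~\ref{koszultable}, so the remark applies: if any of them were G-quadratic, it would necessarily admit at least two distinct null-square linear forms. I would therefore argue by contradiction, showing that each of (5), (10) and (11) has a \emph{unique} null-square linear form.

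To count null-square linear forms I would use Lemma~\ref{linear_forms_and_points}, which puts them in bijection (up to scalar) with the points of the projective variety $\mathcal{V}(V^{\perp})$. The concrete task thus becomes computing $V^{\perp}$ for each ideal and checking that its zero locus is a single point. Each of these ideals lives in a polynomial ring in four variables and is generated by seven linearly independent quadrics, so $V$ is seven-dimensional and $V^{\perp}$ is three-dimensional; I would obtain a basis of $V^{\perp}$ by pairing a generic degree-two form against the seven listed generators via the apolarity pairing of Subsection~\ref{apolarity} and solving the resulting linear system in the ten coefficients. The pairing is legitimate here because the degree is $d=2$ and the standing hypothesis $\fchar\mathbbm{k}\neq 2$ forces $\fchar\mathbbm{k}=0$ or $\fchar\mathbbm{k}>2$, exactly what Subsection~\ref{apolarity} requires.

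Running this routine computation, I expect each $V^{\perp}$ to acquire the shape $\vspan\{x^2,\,Q_2,\,Q_3\}$, with $Q_2$ and $Q_3$ each the sum of a fresh square and a cross term. Concretely, I anticipate $V^{\perp}=\vspan\{x^2,\,y^2+2xt_2,\,t_1^2+2xy\}$ for (11), $V^{\perp}=\vspan\{x^2,\,y^2+2xt_2,\,t_1^2+2xt_2\}$ for (10), and $V^{\perp}=\vspan\{x^2,\,y_1^2+2xt,\,y_2^2+2y_1t\}$ for (5). In each case the first generator forces the first projective coordinate to vanish, whereupon the two remaining quadrics collapse to the vanishing of two further coordinates, leaving the single point whose only nonzero coordinate is the one attached to the last variable. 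Hence $\mathcal{V}(V^{\perp})$ is one point, the algebra has exactly one null-square linear form (the last variable, whose square indeed lies among the generators), and Remark~\ref{atleasttwoforms} is contradicted.

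I do not anticipate any real obstacle: the argument reduces to three small linear systems followed by the inspection of three systems of three quadrics, each solved by successive substitution. The only place asking for care is the bookkeeping of factorials in the apolarity pairing, where $\langle x^2\mid x^2\rangle=2$ but $\langle xy\mid xy\rangle=1$, so that a square contributes a factor of $2$ while a genuine cross term does not; getting these constants right is what pins down the exact coefficients above, though their precise values are irrelevant to the final count of points.
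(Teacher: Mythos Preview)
Your proposal is correct and follows exactly the paper's approach: Koszulness from Lemma~\ref{allkoszul}, then contradict Remark~\ref{atleasttwoforms} by checking that each algebra has a unique null-square linear form. The paper merely asserts that ``it is immediate to check that there exists only one'', whereas you spell out the verification via $\mathcal{V}(V^{\perp})$; your computed bases for $V^{\perp}$ and the resulting single points are all correct.
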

\begin{proof}
The Koszulness of the algebras (5), (10), (11) was already established in Lemma \ref{allkoszul}.
Suppose now that any of those were G-quadratic. By Remark \ref{atleasttwoforms}, then, we should find at least two null-square linear forms, but it is immediate to check that there exists only one. Contradiction.
\end{proof}

\begin{rem}
Via a subtler analysis of the family $\mathfrak{F}$ in Proposition \ref{Gquadneccond}, we are also able to prove that the Koszul algebras (4), (7), and (8) in Table \ref{koszultable} are not G-quadratic. The argument still relies on the properties of null-square linear forms for arbitrary elements of $\mathfrak{F}$.
\end{rem}

\newpage
\appendix
\section{Proof of Lemma \ref{allkoszul}} \label{appendixkoszul}
Our goal is to show that every algebra in Table \ref{koszultable} admits a Koszul filtration that works in all characteristics. We first need to build some tools.

Let $S$ be the polynomial ring $\mathbbm{k}[x_1, \ldots, x_n]$ and write $\mathbf{x}^{\mathbf{u}}$ as a shorthand to denote the monomial $x_1^{u_1}x_2^{u_2}\ldots x_n^{u_n}$ in $S$. Following Kahle and Miller \cite[Definition 2.15]{KahleMiller}, we say that an ideal $I$ in $S$ is a \emph{unital binomial ideal} if it can be generated by a set of monomials and pure-difference binomials, i.e. binomials of the form $\mathbf{x}^{\mathbf{u}} - \mathbf{x}^{\mathbf{v}}$.
Refining very slightly Proposition 1.1 and Corollary 1.7 in \cite{EisenbudSturmfels}, we get the following result:

\begin{lemma} \label{unitalbinomial}
Let $I$ be a unital binomial ideal in $S = \mathbbm{k}[x_1, \ldots, x_n]$ and fix a monomial order $\tau$ on $S$.
\begin{enumerate}
\item The reduced Gr\"obner basis $\mathcal{G}$ of $I$ with respect to $\tau$ is made of monomials and pure-difference binomials and does not depend on the characteristic of the field $\mathbbm{k}$. As a consequence, the Hilbert series of $S/I$ is characteristic-free.
\item The normal form of any monomial with respect to $\tau$ modulo $\mathcal{G}$ is either a monomial or 0. This computation does not depend on the characteristic of the field $\mathbbm{k}$. 
\item Any set of monomials and pure-difference binomials which generates $I$ in some characteristic is a generating set for $I$ in all characteristics.
\item Let $\mathbf{x}^{\mathbf{a}}$ be a monomial in $S$. Then $I :_S \mathbf{x}^{\mathbf{a}}$ is a unital binomial ideal.
\end{enumerate}
\end{lemma}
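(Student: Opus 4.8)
The plan is to prove each of the four parts of Lemma~\ref{unitalbinomial} by tracking how pure-difference binomials and monomials behave under Buchberger's algorithm, emphasizing at each stage that no field operation other than $+1$ and $-1$ ever occurs, so the characteristic plays no role. The conceptual core is an observation about S-polynomials: if $f = \mathbf{x}^{\mathbf{u}} - \mathbf{x}^{\mathbf{v}}$ and $g = \mathbf{x}^{\mathbf{p}} - \mathbf{x}^{\mathbf{q}}$ are two pure-difference binomials, then their S-polynomial (and indeed any S-polynomial involving a monomial generator) is again, after cancellation, either a pure-difference binomial or a monomial, with all nonzero coefficients equal to $\pm 1$. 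I would first isolate this as the main computational claim, since everything else flows from it.

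For part~(i), I would run Buchberger's algorithm starting from a generating set of monomials and pure-difference binomials. By the S-polynomial observation, every element produced is again of this type, and the reduction steps (subtracting $\pm 1$ times a leading-term multiple) never introduce a coefficient outside $\{0, \pm 1\}$; hence the reduced Gr\"obner basis $\mathcal{G}$ consists of monomials and pure-difference binomials. Crucially, whether a given S-polynomial reduces to zero depends only on combinatorial cancellation of monomials, not on the characteristic, so the algorithm produces the \emph{same} $\mathcal{G}$ over every field. The characteristic-freeness of $\HS(S/I)$ then follows because the Hilbert series is determined by the leading-term ideal $\mathrm{in}_\tau(I)$, which is spanned by the (characteristic-independent) leading monomials of $\mathcal{G}$. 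Part~(ii) is then essentially a corollary: normal-form reduction of a monomial modulo $\mathcal{G}$ repeatedly replaces the current monomial by another monomial (the nonleading term of the relevant binomial in $\mathcal{G}$) or kills it against a monomial generator, so the output is a monomial or $0$, again by the same $\pm 1$ bookkeeping.

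Part~(iii) I would handle by combining (i) with the standard fact that a finite set generates $I$ if and only if each of its elements reduces to $0$ modulo $\mathcal{G}$ and $\mathcal{G}$ lies in the ideal they generate; since reduction of pure-difference binomials and monomials is characteristic-free by (i)--(ii), membership tests give the same answer in every characteristic, so a generating set in one characteristic generates in all. For part~(iv), the colon ideal $I :_S \mathbf{x}^{\mathbf{a}}$, I would invoke the result of Eisenbud and Sturmfels (their Corollary~1.7, which we are refining) that a colon of a binomial ideal by a monomial is again binomial, and then check that the pure-difference and unital structure is preserved: concretely, one can saturate variable-by-variable, and dividing a pure-difference binomial $\mathbf{x}^{\mathbf{u}} - \mathbf{x}^{\mathbf{v}}$ by a common monomial factor leaves it pure-difference, while the usual Gr\"obner-basis description of the colon produces only $\pm 1$ coefficients.

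The main obstacle I anticipate is part~(iv): verifying that the colon operation stays within the unital (as opposed to merely binomial) class requires care, because a general binomial ideal's colon can in principle produce binomials with coefficients other than $\pm 1$. I would address this by reducing to the saturation by a single variable $x_i$ and describing $(I : x_i^\infty)$ or $(I : x_i)$ via the reduced Gr\"obner basis from part~(i): since every leading coefficient is $\pm 1$ and every reduction step preserves the $\pm 1$ structure, the colon's generators inherit it. The remaining parts are bookkeeping once the S-polynomial observation is pinned down, so I would state that observation carefully first and lean on it throughout.
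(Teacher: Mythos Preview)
Your treatment of parts~(i) and~(ii) is essentially the paper's: it too appeals directly to Buchberger's algorithm and the division algorithm, noting that every step stays within $\{0,\pm 1\}$ coefficients.

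For part~(iii) you take a different route. You argue that running Buchberger on the candidate set $U$ is itself characteristic-free, so the reduced Gr\"obner basis of $(U)$ coincides with $\mathcal{G}$ in every characteristic. The paper instead uses a one-line Hilbert-series squeeze: by~(ii) each element of $U$ lies in $I$ in all characteristics, so $(U)\subseteq I$; but $\HS(S/(U))$ and $\HS(S/I)$ are both characteristic-free by~(i), and they agree in the given characteristic, hence $(U)=I$ everywhere. Both arguments are valid; the paper's avoids rerunning the algorithm and is shorter.

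Part~(iv) is where you have a genuine gap. Your plan---invoke Eisenbud--Sturmfels for binomiality, then reduce to saturations by single variables and argue that the Gr\"obner description ``produces only $\pm 1$ coefficients''---is not a proof, and you correctly flag it as the main obstacle. The paper bypasses all of this with a direct construction using part~(ii): it takes
\[
U=\{\mathbf{x}^{\mathbf{u}}:\mathbf{x}^{\mathbf{u}+\mathbf{a}}\in I\}\cup\{\mathbf{x}^{\mathbf{u}}-\mathbf{x}^{\mathbf{v}}:\NF_{\mathcal{G}}(\mathbf{x}^{\mathbf{u}+\mathbf{a}})=\NF_{\mathcal{G}}(\mathbf{x}^{\mathbf{v}+\mathbf{a}})\neq 0\}
\]
and shows $U$ generates $I:_S\mathbf{x}^{\mathbf{a}}$ by a telescoping-sum argument (group the monomials of any $f\in I:_S\mathbf{x}^{\mathbf{a}}$ by the normal form of $\mathbf{x}^{\mathbf{a}}$ times each monomial, and rewrite each group as a sum of consecutive differences). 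This is the missing idea: once you have~(ii), normal forms give you an explicit unital binomial generating set for the colon, with no need to pass through saturations, elimination, or the general Eisenbud--Sturmfels machinery.
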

\begin{proof}
Parts (i) and (ii) are immediate applications of the Buchberger algorithm (with a final interreduction) and of the division algorithm, respectively.

Let us prove part (iii).
Let $U = \{\mathbf{x}^{\mathbf{u_1}}, \ldots, \mathbf{x}^{\mathbf{u_r}}, \mathbf{x}^{\mathbf{v_1}}-\mathbf{x}^{\mathbf{v'_1}}, \ldots, \mathbf{x}^{\mathbf{v_s}} - \mathbf{x}^{\mathbf{v'_s}}\}$ be a generating set for $I$ in characteristic $c$. Then, by part (ii), one has that $(U) \subseteq I$ in all characteristics and, since the Hilbert series of both sides do not depend on the characteristic by part (i), equality holds in all characteristics.

As regards part (iv), we claim that
\[
U := \{\mathbf{x}^{\mathbf{u}} \mid \mathbf{x}^{\mathbf{u}+\mathbf{a}} \in I\} \cup \{\mathbf{x}^{\mathbf{u}} - \mathbf{x}^{\mathbf{v}} \mid \NF_{\mathcal{G}}(\mathbf{x}^{\mathbf{u}+\mathbf{a}}) = \NF_{\mathcal{G}}(\mathbf{x}^{\mathbf{v}+\mathbf{a}}) \neq 0\}
\]
is a generating set for $I :_S \mathbf{x}^{\mathbf{a}}$.

It is easy to see that all elements in $U$ belong to $I :_S \mathbf{x}^{\mathbf{a}}$. On the other hand, let $f \in I :_S \mathbf{x}^{\mathbf{a}}$ and assume without loss of generality that no $\mathbf{x}^{\mathbf{u}}$ in the support of $f$ is such that $\mathbf{x}^{\mathbf{u}+\mathbf{a}}$ lies in $I$. Then, by part (ii), the normal forms modulo $\mathcal{G}$ of all monomials in the support of $f \cdot \mathbf{x}^{\mathbf{a}}$ form a set of distinct nonzero monomials $\{\mathbf{x}^{\mathbf{b_1}}, \ldots, \mathbf{x}^{\mathbf{b_k}}\}$. Now write $f$ as $f_1 + \ldots + f_k$, where the support of each $f_i$ contains precisely the monomials $\mathbf{x}^{\mathbf{u}}$ of $f$ such that $\NF_{\mathcal{G}}(\mathbf{x}^{\mathbf{u}+\mathbf{a}}) = \mathbf{x}^{\mathbf{b_i}}$. If we prove that each $f_i$ can be written as a combination of elements in $U$, we are done. Let then $f_1 = \sum_{i=1}^{N}c_i \cdot \mathbf{x}^{\mathbf{u_i}}$, where $c_i \in \mathbbm{k}$ for all $i$. By construction one must have that $\sum_{i=1}^{N}c_i = 0$ and hence, since \[f_1 = c_1(\mathbf{x}^{\mathbf{u_1}}-\mathbf{x}^{\mathbf{u_2}}) + (c_1+c_2)(\mathbf{x}^{\mathbf{u_2}}-\mathbf{x}^{\mathbf{u_3}}) + \ldots + (\sum_{i=1}^{N-1}c_i)(\mathbf{x}^{\mathbf{u_{N-1}}} - \mathbf{x}^{\mathbf{u_N}}),\] the claim follows.
\end{proof}

\begin{cor} \label{monomialfiltrations}
If $I$ is a unital binomial ideal and $\mathfrak{F}$ is a monomial Koszul filtration (i.e. a filtration where all ideals are generated by variables) for $I$ in some characteristic, then $\mathfrak{F}$ is a Koszul filtration for $I$ in all characteristics. 
\end{cor}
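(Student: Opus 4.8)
The plan is to verify, one at a time, the three defining conditions of a Koszul filtration for the given family $\mathfrak{F}$, and to observe that each condition, once it holds in a single characteristic, is governed by characteristic-free data and hence persists in every characteristic. Since $\mathfrak{F}$ is monomial, I would record each of its members as $\mathfrak{a}_A := ((A) + I)/I$, where $A$ ranges over the relevant subsets of $\{x_1, \ldots, x_n\}$ and $(A)$ denotes the ideal of $S$ generated by the variables in $A$. The two anchoring conditions $(0) \in \mathfrak{F}$ and $(R_1) \in \mathfrak{F}$, together with the requirement that every nonzero member be generated in degree $1$, involve the same variable sets in all characteristics and are therefore immediately characteristic-free.

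For the recursive condition, fix a nonzero member $\mathfrak{a} = \mathfrak{a}_A$ together with the $\mathfrak{b} = \mathfrak{a}_B$ guaranteed in the base characteristic $c_0$. First I would show that the containment $\mathfrak{b} \subseteq \mathfrak{a}$ is equivalent to $B \subseteq A$: since $I \subseteq \mathfrak{m}^2$ carries no linear part, one has $((A)+I)_1 = \vspan(A)$, so a variable $x_j$ lies in $(A)+I$ if and only if $j \in A$, a purely combinatorial (hence characteristic-free) condition. Granting $B \subseteq A$, the module $\mathfrak{a}/\mathfrak{b}$ is generated in degree $1$ with $(\mathfrak{a}/\mathfrak{b})_1 = \vspan(A) / \vspan(B)$ of dimension $|A \setminus B|$; being cyclic thus forces $|A \setminus B| = 1$, again a characteristic-free statement. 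In particular, writing $A \setminus B = \{x_i\}$, one has $\mathfrak{a} = \mathfrak{b} + (\overline{x_i})$.

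It remains to treat the colon $\mathfrak{b} :_R \mathfrak{a}$, which is where the real content lies. Using $\mathfrak{a} = \mathfrak{b} + (\overline{x_i})$ and the fact that $r\mathfrak{b} \subseteq \mathfrak{b}$ always holds, the colon collapses to the single-variable colon $\mathfrak{b} :_R \overline{x_i}$, whose preimage in $S$ is $((B)+I) :_S x_i$. Here I would invoke Lemma \ref{unitalbinomial}: the ideal $(B)+I$ is unital binomial (a monomial ideal plus a unital binomial ideal), so by part (iv) the colon $((B)+I):_S x_i$ is again unital binomial, and by part (i) its reduced Gr\"obner basis with respect to a fixed term order is characteristic-free. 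The candidate value $\mathfrak{c} = \mathfrak{a}_C \in \mathfrak{F}$ pulls back to the unital binomial ideal $(C)+I$, whose reduced Gr\"obner basis is likewise characteristic-free. Two unital binomial ideals coincide if and only if their reduced Gr\"obner bases (with respect to the same order) coincide; as both bases are the very same sets of monomials and pure-difference binomials in every characteristic, the equality $((B)+I):_S x_i = (C)+I$ is a characteristic-free statement. Since it holds in $c_0$, it holds everywhere, so $\mathfrak{b}:_R\mathfrak{a} = \mathfrak{c} \in \mathfrak{F}$ in all characteristics, completing the recursive condition.

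The main obstacle is precisely this last step: a priori a colon ideal can behave differently across characteristics, and the crux is to reduce $\mathfrak{b}:_R\mathfrak{a}$ to a single-variable colon so that Lemma \ref{unitalbinomial}(iv) applies, and then to translate ``membership in $\mathfrak{F}$'' into the equality of two characteristic-free reduced Gr\"obner bases. The earlier combinatorial reductions (containment $\Leftrightarrow B\subseteq A$, cyclicity $\Leftrightarrow |A\setminus B|=1$) are routine dimension counts on spans of variables; the genuine content is packaged entirely into the characteristic-freeness results of Lemma \ref{unitalbinomial}.
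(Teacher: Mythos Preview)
Your proof is correct and follows essentially the same route as the paper's: reduce the colon $\mathfrak{b}:_R\mathfrak{a}$ to a single-variable colon in $S$, invoke Lemma~\ref{unitalbinomial}(iv) to see it is unital binomial, and then use characteristic-freeness to transport the equality. The only cosmetic difference is at the final step: the paper cites part~(iii) of Lemma~\ref{unitalbinomial} (a unital binomial generating set that works in one characteristic works in all), whereas you go directly through part~(i) via reduced Gr\"obner bases; since (iii) is itself proved from (i) and (ii), the two arguments are interchangeable.
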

\begin{proof}
By hypothesis, there exists a characteristic $c$ where $\mathfrak{F}$ is a monomial Koszul filtration. Consider $I$ to be inside $S := \mathbbm{k}[x_1, \ldots, x_n]$, where $\mathbbm{k}$ has characteristic $c$.
For each $(\overline{x}_{i_1}, \ldots, \overline{x}_{i_m}) \in \mathfrak{F}$ one has (after some renumbering of the indices) that $(\overline{x}_{i_2}, \ldots, \overline{x}_{i_m}) \in \mathfrak{F}$ and $(\overline{x}_{i_2}, \ldots, \overline{x}_{i_m}) \ {:}_{S/I} \ (\overline{x}_{i_1}) \in \mathfrak{F}$. The last statement means that, at least in characteristic $c$, 
\begin{equation} \label{colon_formula} \tag{$\diamondsuit$}
(I + (x_{i_2}, \ldots, x_{i_m})) :_S (x_{i_1}) = I + (x_{j_1}, \ldots, x_{j_r}),
\end{equation}
where $(\overline{x}_{j_1}, \ldots, \overline{x}_{j_r}) \in \mathfrak{F}$.

Now the left hand side of \eqref{colon_formula} is unital by Lemma \ref{unitalbinomial}.(iv), whereas the right hand side is generated by monomials and pure-difference binomials. Hence, by Lemma \ref{unitalbinomial}.(iii), \eqref{colon_formula} must hold in all characteristics.
\end{proof}

\begin{proof}[Proof of Lemma \ref{allkoszul}]
Consider Table \ref{koszulfiltrations}. The monomial Koszul filtrations exhibited there for the algebras $(1)$--$(9)$ and $(12)$--$(13)$ (defined by unital binomial ideals) work in characteristic $0$, as it can be checked for instance by using \cocoa. Hence, by Corollary \ref{monomialfiltrations}, these are Koszul filtrations in all characteristics. We are left with two cases: the algebras $(10)$ and $(11)$.
If $\fchar\mathbbm{k} = 2$, one checks that the proposed filtrations for $(10)$ and $(11)$ work. For the rest of the proof, let $\fchar\mathbbm{k} \neq 2$ and let $I$ and $J$ be the ideals defining the algebras (10) and (11) respectively. 

Even for these two objects, most of the colon computations we need fit the setting of Corollary \ref{monomialfiltrations} (possibly after a suitable change of coordinates). The only computations that have to be checked by hand are the following: 
\[\begin{array}{c c}
0 :_{S/I} (\overline{x}) = (\overline{y}, \overline{t}_1), & 0 :_{S/I} (\overline{y}) = (\overline{x}, \overline{t}_1, \overline{t}_2), \\
0 :_{S/J} (\overline{x + t}_1) = (\overline{y - t}_1), & 0 :_{S/J} (\overline{y - t}_1) = (\overline{x + t}_1, \overline{t}_2).
\end{array}\]

$J$ is unital binomial, whereas $I$ can become so after a suitable change of coordinates, since $\fchar\mathbbm{k} \neq 2$. As a consequence, by Lemma \ref{unitalbinomial}.(i) the Hilbert series of $I$, $J$, $I+(x)$, $I+(y)$ and $J+(y-t_1)$ are characteristic-free. Since $J+(x+t_1)$ becomes monomial after sending $x$ into $x-t_1$, its Hilbert series is also characteristic-free. Hence, the Hilbert series of $I :_S (x)$, $I :_S (y)$, $J :_S (x+t_1)$, $J :_S (y-t_1)$ are characteristic-free too.
One checks that, in any characteristic,
\[\begin{array}{c c}
(y, t_1, t_2^2, xt_2) \subseteq I :_S (x), & (x, y^2, t_1, t_2) \subseteq I :_S (y), \\
(y-t_1, xt_1, xt_2, t_1^2, t_1t_2, t_2^2) \subseteq J :_S (x+t_1), & (x+t_1, t_2, t_1^2, y^2, yt_1) \subseteq J :_S (y-t_1).
\end{array}\]
and Hilbert series are the expected ones.
Finally, one checks by hand that going modulo $I$ or $J$ respectively yields the desired results regardless of the characteristic.

\begin{table}[h]
\caption[]{
Every arrow in the diagrams below represents the computation of a colon ideal. For instance,

\vspace{5pt}
\begin{tabulary}{\textwidth}{c | c | c | c}

\cline{2-3}
\hspace{0.3\textwidth}
&
(1)
&
\begin{tikzcd}[column sep=small]
(y) \arrow{d}[swap]{(x, t_1, t_2)}\\
(0)
\end{tikzcd}
&
\hspace{0.3\textwidth}
\\
\cline{2-3}

\end{tabulary}

\vspace{5pt}
should be read as ``$0 :_{S/I} (\overline{y}) = (\overline{x}, \overline{t}_1, \overline{t}_2)$'', where $I$ is the ideal corresponding to line $(1)$ in Table \ref{koszultable}.
}
\label{koszulfiltrations}

\begin{adjustbox}{center, max width = \paperwidth}
\begin{tabular}[t]{| c | c | c | c |}
\hline
(1) &
\begin{tikzcd}[column sep=small]
\mathfrak{m} = (x, y, t_1, t_2, t_3) \arrow{d}[swap]{(y, t_1, t_2, t_3)} \\
(y, t_1, t_2, t_3) \arrow{d}[swap]{\mathfrak{m}}\\
(y, t_1, t_3) \arrow{d}[swap]{\mathfrak{m}} & (x, t_1, t_2) \arrow{d}{\mathfrak{m}}\\
(y, t_1) \arrow{d}[swap]{\mathfrak{m}} & (x, t_2) \arrow{d}{\mathfrak{m}}\\
(y) \arrow{d}[swap]{(x, t_1, t_2)} & (x) \arrow{dl}{(y, t_1, t_3)}\\
(0)
\end{tikzcd}
&
\begin{tikzcd}[column sep=small]
& \mathfrak{m} = (x, y, t_1, t_2) \arrow{d}{\mathfrak{m}} \\
(x, t_1, t_2) \arrow{dr}[swap]{\mathfrak{m}} & (x, y, t_2) \arrow{d}{(x, t_1, t_2)}\\
& (x, t_2) \arrow{d}[swap]{\mathfrak{m}} & (y, t_1) \arrow{d}{(x, y, t_2)}\\
& (x) \arrow{d}[swap]{(y, t_1)} & (y) \arrow{dl}{(x, t_1, t_2)}\\
& (0)
\end{tikzcd}
&
(2)
\\
\hline
(3)
&
\begin{tikzcd}[column sep=small]
\mathfrak{m} = (x, y, t_1, t_2) \arrow{d}[swap]{(x, t_1, t_2)} \\
(x, t_1, t_2) \arrow{d}[swap]{\mathfrak{m}}\\
(x, t_1) \arrow{d}[swap]{\mathfrak{m}} & (y, t_2) \arrow{d}{\mathfrak{m}}\\
(x) \arrow{d}[swap]{(y, t_2)} & (y) \arrow{dl}{(x, t_1)}\\
(0)
\end{tikzcd}
&
\begin{tikzcd}[column sep=small]
\mathfrak{m} = (x, y_1, y_2, t) \arrow{d}[swap]{\mathfrak{m}} \\
(x, y_1, t) \arrow{d}[swap]{\mathfrak{m}} & (x, y_1, y_2) \arrow{dl}{\mathfrak{m}}\\
(x, y_1) \arrow{d}[swap]{(y_1, y_2)} & (y_1, y_2) \arrow{dl}{(x, y_1, t)}\\
(y_1) \arrow{d}[swap]{(x, y_1, y_2)}\\
(0)
\end{tikzcd}
&
(4)
\\
\hline
\end{tabular}
\end{adjustbox}
\end{table}

\begin{table}[h]
\begin{adjustbox}{center, max width = \paperwidth}
\begin{tabular}[t]{| c | c | c | c |}
\hline
(5)
&
\begin{tikzcd}[column sep=small]
\mathfrak{m} = (x, y_1, y_2, t) \arrow{d}[swap]{\mathfrak{m}} \\
(x, y_1, t) \arrow{d}[swap]{\mathfrak{m}} & (x, y_2, t) \arrow{dl}{\mathfrak{m}}\\
(x, t) \arrow{d}[swap]{(x, y_2, t)} & (y_1, y_2) \arrow{d}{(x, y_2, t)}\\
(x) \arrow{d}[swap]{(y_1, y_2)} & (y_2) \arrow{dl}{(x, y_1, t)}\\
(0)
\end{tikzcd}
&
\begin{tikzcd}[column sep=small]
& \mathfrak{m} = (x, y_1, y_2, y_3, t) \arrow{d}[swap]{\mathfrak{m}} \\
& (x, y_1, y_2, t) \arrow{d}[swap]{\mathfrak{m}}\\
(x, y_2, y_3) \arrow{dr}[swap]{\mathfrak{m}} & (x, y_1, y_2) \arrow{d}[swap]{\mathfrak{m}} & (y_1, y_2, y_3) \arrow{d}{\mathfrak{m}}\\
& (x, y_2) \arrow{d}[swap]{(x, y_1, y_2, t)} & (y_1, y_3) \arrow{d}{\mathfrak{m}}\\
& (x) \arrow{d}[swap]{(y_1, y_2, y_3)} & (y_1) \arrow{dl}{(x, y_2, y_3)}\\
& (0)
\end{tikzcd}
&
(6)
\\
\hline
(7)
&
\begin{tikzcd}[column sep=small]
\mathfrak{m} = (x, y_1, y_2, t) \arrow{d}[swap]{\mathfrak{m}} \\
(x, y_2, t) \arrow{d}[swap]{(x, y_2, t)}\\
(x, t) \arrow{d}[swap]{\mathfrak{m}} & (y_1, y_2) \arrow{d}{(x, y_2, t)}\\
(x) \arrow{d}[swap]{(y_1, y_2)} & (y_2) \arrow{dl}{(x, y_2, t)}\\
(0)
\end{tikzcd}
&
\begin{tikzcd}[column sep=small]
\mathfrak{m} = (x, y_1, y_2, t) \arrow{d}[swap]{\mathfrak{m}} \\
(x, y_2, t) \arrow{d}[swap]{\mathfrak{m}}\\
(x, y_2) \arrow{d}[swap]{\mathfrak{m}}\\
(x) \arrow{d}[swap]{(y_1)} & (y_1) \arrow{dl}{(x, y_2)}\\
(0)
\end{tikzcd}
&
(8)
\\
\hline
(9)
&
\begin{tikzcd}[column sep=small]
\mathfrak{m} = (x, y, t_1, t_2, t_3) \arrow{d}[swap]{\mathfrak{m}} \\
(x, y, t_2, t_3) \arrow{d}[swap]{\mathfrak{m}} & (x, t_1, t_2, t_3) \arrow{dl}{\mathfrak{m}}\\
(x, t_2, t_3) \arrow{d}[swap]{\mathfrak{m}}\\
(x, t_3) \arrow{d}[swap]{\mathfrak{m}} & (y, t_1) \arrow{d}{(x, y, t_2, t_3)}\\
(x) \arrow{d}[swap]{(y, t_1)} & (y) \arrow{dl}{(x, t_1, t_2, t_3)}\\
(0)
\end{tikzcd}
&
\begin{tikzcd}[column sep=small]
& \mathfrak{m} = (x, y, t_1, t_2) \arrow{d}{\mathfrak{m}} \\
(x, t_1, t_2) \arrow{dr}[swap]{(x, y, t_2)} & (x, y, t_2) \arrow{d}{(x, t_1, t_2)}\\
& (x, t_2) \arrow{d}[swap]{\mathfrak{m}} & (y, t_1) \arrow{d}{(x, y, t_2)}\\
& (x) \arrow[swap]{d}{(y, t_1)} & (y) \arrow{dl}{(x, t_1, t_2)}\\
& (0)
\end{tikzcd}
&
(10)
\\
\hline
\end{tabular}
\end{adjustbox}
\end{table}

\begin{table}[h]
\begin{adjustbox}{center, max width = \paperwidth}
\begin{tabular}[t]{| c | c | c | c |}
\hline
(11)
&
\begin{tikzcd}[column sep=small]
\mathfrak{m} = (x, y, t_1, t_2) \arrow{d}[swap]{\mathfrak{m}} \\
(x, t_1, t_2) \arrow{d}[swap]{\mathfrak{m}}\\
(x+t_1, t_2) \arrow{d}[swap]{\mathfrak{m}}\\
(x+t_1) \arrow[swap]{d}{(y-t_1)} & (y-t_1) \arrow{dl}{(x+t_1, t_2)}\\
(0)
\end{tikzcd}
&
\begin{tikzcd}[column sep=small]
\mathfrak{m} = (x, y, t_1, t_2, t_3) \arrow{d}[swap]{\mathfrak{m}} \\
(x, t_1, t_2, t_3) \arrow{d}[swap]{\mathfrak{m}}\\
(x, t_2, t_3) \arrow{d}[swap]{(y, t_2, t_3)} & (y, t_2, t_3) \arrow{dl}{(x, t_1, t_2, t_3)}\\
(t_2, t_3) \arrow{d}[swap]{\mathfrak{m}}\\
(t_3) \arrow{d}[swap]{(x, t_1, t_2, t_3)}\\
(0)
\end{tikzcd}
&
(12)
\\
\hline
(13)
&
\begin{tikzcd}[column sep=small]
\mathfrak{m} = (x, y, t_1, t_2) \arrow{d}[swap]{\mathfrak{m}} \\
(x, t_1, t_2) \arrow{d}[swap]{\mathfrak{m}}\\
(x, t_2) \arrow{d}[swap]{\mathfrak{m}}\\
(x) \arrow{d}[swap]{(y)} & (y) \arrow{dl}{(x, t_1, t_2)}\\
(0)
\end{tikzcd}
\\
\cline{1-2}
\end{tabular}
\end{adjustbox}
\end{table}
\end{proof}

\clearpage
\section{Proof of Proposition \ref{Gquadneccond}} \label{appendixGquad}
For this Appendix, let $S = \mathbbm{k}[x, y, z, t]$.

To prove Proposition \ref{Gquadneccond} we first need the following easy lemma.

\begin{lemma} \label{uniquemonomial}
The only (up to variable renaming) monomial quadratic ideal $J$ in $S$ such that the Hilbert series of $S/J$ is $\mathbf{A}_4$ is $(y^2, z^2, t^2, xy, xz, xt, zt)$.
\end{lemma}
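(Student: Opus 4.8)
The plan is to determine all monomial quadratic ideals $J \subseteq S = \mathbbm{k}[x,y,z,t]$ whose quotient has Hilbert series $\mathbf{A}_4 = 1 + 4z + 3z^2 + z^3 + z^4 + \ldots$, and show there is exactly one up to renaming the variables. Since $J$ is monomial and quadratic, it is generated by a subset of the ten degree-two monomials in four variables. The Hilbert function forces $\dim_{\mathbbm{k}}(S/J)_2 = 3$, so exactly $10 - 3 = 7$ of these quadratic monomials must lie in $J$; equivalently, the complement (the standard monomials of degree $2$) consists of exactly three monomials. So first I would reformulate the problem combinatorially: I am choosing a $3$-element set $B$ of degree-$2$ monomials to survive, and the constraints coming from the Hilbert function in degrees $3$ and beyond will pin down $B$ up to symmetry.

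Next I would exploit the tail of $\mathbf{A}_4$, namely $\dim_{\mathbbm{k}}(S/J)_d = 1$ for every $d \geq 3$. This is a strong condition: in each degree $d \geq 3$ there is exactly one standard monomial. A clean way to read this off is to note that $S/J$ has a one-dimensional socle-like behaviour in high degree forced by a single variable. Concretely, since a monomial ideal quotient has $\dim_{\mathbbm{k}}(S/J)_d$ eventually constant equal to $1$, the set of standard monomials in large degree must be the powers $\{v^d\}$ of a single variable $v$; that variable $v$ has the property that $v^2 \notin J$ while all other products involving the remaining variables eventually die. After renaming I would take $v = x$, so $x^2 \notin J$ and $x^d$ is the unique standard monomial in each high degree. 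This already forces $xy, xz, xt \in J$ would be too strong, so I would instead carefully track which mixed products survive: I expect exactly the three surviving degree-$2$ monomials to be $x^2$ together with... — and here the degree-$3$ count must be checked.

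The key computation is then to use the degree-$3$ condition $\dim_{\mathbbm{k}}(S/J)_3 = 1$ to rule out all candidate sets $B$ except the one yielding $(y^2, z^2, t^2, xy, xz, xt, zt)$. For the claimed ideal the standard degree-$2$ monomials are $x^2, xt$... so I would recompute: with $J = (y^2, z^2, t^2, xy, xz, xt, zt)$ the surviving quadratic monomials are $x^2$, $yz$, $yt$, giving $\dim_{\mathbbm{k}}(S/J)_2 = 3$, and in degree $3$ only $x^3$ survives, matching $\mathbf{A}_4$. So I would argue in reverse: enumerate the $3$-subsets $B$ with $x^2 \in B$ (forced, since $\mathcal{V}(J)$ is nonempty only if some $v^2 \notin J$, and the tail forces a single such variable), and check which choices of the other two surviving monomials are compatible with having exactly one standard monomial in every degree $\geq 3$. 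The main obstacle will be organizing this enumeration efficiently and without gaps: I must verify that any $B \neq \{x^2, yz, yt\}$ either produces too many standard monomials in degree $3$ (violating $\dim_{\mathbbm{k}}(S/J)_3 = 1$) or produces a growing or vanishing Hilbert function in high degree (violating the constant tail $1,1,1,\ldots$). I would structure this as: first pin down that the tail forces all surviving high-degree monomials to be powers of one variable, then show the two remaining degree-$2$ survivors must be the two ``mixed'' monomials $yz, yt$ sharing the variable $y$ (up to renaming), since any other configuration either creates a second independent power series in the tail or an extra standard cubic. Once the combinatorics is nailed down, identifying the resulting ideal with the stated one and confirming the Hilbert series is a direct check.
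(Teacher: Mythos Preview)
Your approach is correct and, once the hesitations are stripped away, essentially parallel to the paper's. Both arguments amount to: (i) exactly one pure square, say $x^2$, survives; (ii) none of $xy, xz, xt$ survives, by the degree-$3$ count (if, e.g., $xy$ and $x^2$ are both standard then so are $x^3$ and $x^2y$, contradicting $\dim_{\mathbbm{k}}(S/J)_3 = 1$); (iii) hence the remaining two survivors lie in $\{yz, yt, zt\}$, and all three choices are equivalent up to renaming. The only real difference is in how (i) is obtained: the paper invokes the apolarity dictionary (Lemma~\ref{linear_forms_and_points}) together with the fact that the Hilbert polynomial equals $1$, so $\mathcal{V}(J)$ is set-theoretically a single point and thus $V^{\perp}$ contains exactly one square of a variable; you instead use the order-ideal property of standard monomials to argue that the unique standard monomial in each degree $d\geq 3$ must be a pure power $x^d$ (since a non-power would have two distinct degree-$(d{-}1)$ divisors, both standard). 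Your route is slightly more elementary and self-contained; the paper's is shorter because the apolarity machinery is already in place. Either way the enumeration collapses immediately once (i) and (ii) are in hand, so the ``main obstacle'' you anticipate is smaller than you fear.
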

\begin{proof}
Let $V$ be the vector subspace of $S_2$ generated by the monomials minimally generating $J$. Since the Hilbert polynomial of $S/J$ is 1, $\mathcal{V}(J)$ consists of exactly one point. Using the apolarity pairing and in particular Lemma \ref{linear_forms_and_points}, we can equivalently require that $V^{\perp}$ contains the square of only one linear form, i.e. variable: let it be $x^2$ without loss of generality. Note that, if $V^{\perp}$ contained $xy$ (or $xz$ or $xt$), then the Hilbert series of $S/J$ would not be $\mathbf{A}_4$, as $\overline{x}^3$ and $\overline{x^2y}$ (or $\overline{x^2z}$ or $\overline{x^2t}$) would be linearly independent elements of $(S/J)_3$. Then the remaining two generators of $V^{\perp}$ can be chosen only between $yz$, $yt$ and $zt$: it is easy to see that all possible choices are equivalent and yield $J = (y^2, z^2, t^2, xy, xz, xt, zt)$ up to variable renaming.
\end{proof}

\begin{proof}[Proof of Proposition \ref{Gquadneccond}]
By definition, since $R$ is G-quadratic, there exist a change of coordinates $h \in \GL_4(\mathbbm{k})$ and a term order $\tau$ on $S$ such that the generators of $h(I)$ (let them be $f_1$, \ldots, $f_7$) %lecito?
 form a Gr\"obner basis with respect to $\tau$. By Lemma \ref{uniquemonomial}, we can assume (possibly after some variable renaming) that \[in_{\tau}(h(I)) = (y^2, z^2, t^2, xy, xz, xt, zt) = (in_{\tau}(f_1), \ldots, in_{\tau}(f_7)).\] Our goal is to show that, no matter what $\tau$ is, we can always change coordinates so that we land on an element of the family $\mathfrak{F}$. To do this, we divide possible term orders on $S$ with respect to the way they order the variables. Note that, since $z$ and $t$ play an identical role in $(y^2, z^2, t^2, xy, xz, xt, zt)$, we can assume without loss of generality that $z >_{\tau} t$. We need to examine twelve cases.
\renewcommand{\labelenumi}{\arabic{enumi}.}
\begin{enumerate}

\item \label{case 3} $x > z > t > y$: \[\begin{split}h(I) = (y^2, xy-a_1yz-a_2yt, t^2-*yz-*yt, zt-a_3yz-a_4yt&, \\xt-*yz-*yt, z^2-*yz-*yt, xz-*yz-*yt&).\end{split}\] Applying the change of coordinates \[\left\{ \begin{array}{lll} x & \mapsto & x+a_1z+a_2t \\ y & \mapsto & y \\ z & \mapsto & z+a_4y \\ t & \mapsto & t+a_3y \end{array} \right.\] we get \[J = (y^2, xy, zt, t^2-*yz-*yt, xt-*yz-*yt, z^2-*yz-*yt, xz-byz-*yt)\] and sending $x$ into $x+by$ we obtain
\[(y^2, xy, zt, xz-*yt, t^2-*yz-*yt, z^2-*yz-*yt, xt-*yz-*yt)\] as claimed.
\item \label{case 11} $z > t > x > y$: \[\begin{split}h(I) = (y^2, xy, xt-a_1x^2-*yt-*yz, xz-a_2x^2-*yt-*yz&, \\t^2-*x^2-*yt-*yz, zt-*x^2-*yt-*yz, z^2-*x^2-*yt-*yz&).\end{split}\] Sending $t$ into $t+a_1x$ and $z$ into $z+a_2x$ we get \[\begin{split}J = (y^2, xy, xt-*yt-*yz, xz-*yt-*yz, t^2-b_1x^2-*yt-*yz&, \\zt-b_2x^2-*yt-*yz, z^2-b_3x^2-*yt-*yz&).\end{split}\] Imposing the presence of a point $\lbrack x_0, y_0, z_0, t_0\rbrack$ in $\mathcal{V}(J)$ we get that the coefficients $b_1$, $b_2$ and $b_3$ must vanish. Therefore \[\begin{split}J = (y^2, xy, xt-*yt-*yz, xz-*yt-*yz, t^2-*yt-*yz&, \\zt-*yt-*yz, z^2-*yt-*yz&)\end{split}\] and we have come back to case $\ref{case 3}$.

\item \label{case 12} $z > t > y > x$: \[\begin{split}h(I) = (xy-a_1x^2, xt-a_2x^2, y^2-*x^2, xz-a_3x^2-*yt&, \\t^2-*x^2-*yt-*yz, zt-*x^2-*yt-*yz, z^2-*x^2-*yt-*yz&).\end{split}\] Sending $y$ into $y+a_1x$, $z$ into $z+a_3x$ and $t$ into $t+a_2x$ we get \[\begin{split}J = (xy, xt, y^2-b_1x^2, xz-*yt, t^2-b_2x^2-*yt-*yz&, \\zt-b_3x^2-*yt-*yz, z^2-b_4x^2-*yt-*yz&).\end{split}\] Imposing the presence of a point $\lbrack x_0, y_0, z_0, t_0\rbrack$ in $\mathcal{V}(J)$ we get that the coefficients $b_1$, $b_2$, $b_3$ and $b_4$ must vanish. Therefore \[J = (xy, xt, y^2, xz-*yt, t^2-*yt-*yz, zt-*yt-*yz, z^2-*yt-*yz)\] and we have come back to case $\ref{case 3}$.

\item \label{case 1} $x > y > z > t$: \[\begin{split}h(I) = (t^2, zt, z^2-2ayt, xt-*yz-*yt, xz-*yz-*yt&, \\y^2-2byz-2cyt, xy-*yz-*yt&).\end{split}\] Sending $y$ into $y + bz+ (ab^2+c)t$ we obtain \[(t^2, zt, y^2, z^2-*yt, xt-*yz-*yt, xz-*yz-*yt, xy-*yz-*yt)\] and therefore we are back to case \ref{case 3}. 

\item \label{case 2} $x > z > y > t$: \[\begin{split}h(I) = (t^2, y^2-2ayt, zt-*yt, xt-*yz-*yt, xy-*yz-*yt&, \\z^2-*yz-*yt, xz-*yz-*yt&).\end{split}\] Sending $y$ into $y+at$ we get back to case \ref{case 3}.

\item \label{case 9} $z > y > x > t$: \[\begin{split}h(I) = (t^2, xt, zt-*x^2-*yt, xy-*x^2-ayt, xz-*x^2-*yt&, \\y^2-*x^2-2byt, z^2-*x^2-*yt-*yz&).\end{split}\] Sending $x$ into $x+at$ and $y$ into $y+bt$ we get back to case \ref{case 12}.

\item \label{case 10} $z > y > t > x$: \[\begin{split}h(I) = (xt-*x^2, xy-*x^2, t^2-*x^2, xz-*x^2-*yt, zt-*x^2-*yt&, \\y^2-*x^2-2ayt, z^2-*x^2-*yt-*yz&).\end{split}\] Sending $y$ into $y+at$ we get back to case \ref{case 12}.

\item \label{case 7} $z > x > y > t$: \[\begin{split}h(I) = (t^2, xt-ayt, y^2-2byt, xy-cyt, zt-*x^2-*&yt, \\xz-*x^2-*yz-*yt, z^2-*x^2-*yz-*&yt).\end{split}\] Sending $x$ into $\displaystyle x+(c-ab)t$ and $y$ into $y+bt$ we get \[(t^2, y^2, xy, xt-*yt, zt-*x^2-*yt, xz-*x^2-*yz-*yt, z^2-*x^2-*yz-*yt)\] and hence we have come back to case \ref{case 11}.

\item \label{case 8} $z > x > t > y$: \[\begin{split}h(I) = (y^2, xy-ayt, t^2-*yz-*yt, xt-*yz-*yt, zt-*x^2-*yz-*yt&, \\xz-*x^2-*yz-*yt, z^2-*x^2-*yz-*yt&).\end{split}\] Sending $x$ into $x+at$ we get back to case \ref{case 11}.

\item \label{case 4} $y > x > z > t$: \[\begin{split}h(I) = (t^2, zt, xt, z^2-*yt, xz-*yt, xy-*x^2-ayz-*yt&, \\y^2-*x^2-2byz-*yt&).\end{split}\] Sending $x$ into $x+az$ and $y$ into $y+bz$ we get back to case \ref{case 9}.

\item \label{case 5} $y > z > x > t$: \[\begin{split}h(I) = (t^2, xt, zt-*x^2, xz-*x^2-*yt, xy-*x^2-*yt&, \\z^2-*x^2-*yt, y^2-2ayz-*x^2-*yt&).\end{split}\]
Sending $y$ into $y+az$ we get back to case \ref{case 9}.

\item \label{case 6} $y > z > t > x$: \[\begin{split}h(I) = (xt-*x^2, xz-*x^2, xy-*x^2, t^2-*x^2, zt-*x^2&, \\z^2-*yt-*x^2, y^2-2ayz-*yt-*x^2&).\end{split}\] Sending $y$ into $y+az$ we get back to case \ref{case 10}. \qedhere
\end{enumerate}
\renewcommand{\labelenumi}{(\roman{enumi})}
\end{proof}

\section*{Acknowledgements}
The author would like to thank his advisor Aldo Conca for introducing him to the problem studied in the present article and for a great deal of valuable comments and suggestions. The author also wishes to thank Ralf Fr\"oberg and Matteo Varbaro for useful discussions and Ezra Miller for pointing out the reference containing the definition of unital binomial ideal.

	%\clearpage
	\phantomsection
	\addcontentsline{toc}{chapter}{\bibname}
	%\nocite{*}
	\printbibliography
\end{document}